\documentclass[AMS, reqno]{amsart}

\setcounter{secnumdepth}{3}
\setcounter{tocdepth}{3}

\textheight 22.75truecm \textwidth 15truecm
\setlength{\oddsidemargin}{0.35in}\setlength{\evensidemargin}{0.35in}
\setlength{\topmargin}{-.5cm}

\usepackage{amsmath, amsthm, amscd, amsfonts, amssymb, graphicx, color, textcomp,  mathrsfs}
\usepackage[english]{babel}


\newtheorem{theorem}{Theorem}[section]
\newtheorem{lemma}[theorem]{Lemma}
\newtheorem{properties}[theorem]{Properties}
\newtheorem{proposition}[theorem]{Proposition}
\newtheorem{corollary}[theorem]{Corollary}
\theoremstyle{definition}
\newtheorem{definition}[theorem]{Definition}

\theoremstyle{remark}
\newtheorem{remark}[theorem]{Remark}
 \numberwithin{equation}{section}
\def\R{\mathbb{R}}

\def\C{\mathbb{C}}




\def\ie{i.e.\ }

\def\eps{\varepsilon}
\def\supp{{\mathrm{supp}\,}}
\def\eg{{\it e.g.\ }}
\def\d{\,{\mathrm{d}}}


\newcommand{\norm}[1]{{\left\|{#1}\right\|}}

\newcommand{\abs}[1]{{\left|{#1}\right|}}
\newcommand{\scal}[1]{{\left\langle{#1}\right\rangle}}

\def\L{L^2_{\alpha}(\R_+)}
\def\l{L^2_{\alpha}}
\def\r{\mathbf{R_+^2}}
\def\u{\mathbf{U}}
\def\D{\mathcal{D}^\alpha}
\def\H{\mathcal{H}_{\alpha}}
\def\w{\mathbf{W}}
\def\W{\mathbf{W}^{\alpha}}
\def\m{\mu_{\alpha}}
\def\n{\nu_{\alpha}}
\def\e{{\mathbf{E}}}
\def\ss{{\mathcal S}}
\def\ff{{\mathcal F}}

\date{\today}

\begin{document}

\title[Logarithmic uncertainty principles for the HWT]{Logarithmic uncertainty principles for the Hankel wavelet transform}

\author[S. Ghobber]{Saifallah Ghobber$^{1,2}$}

\address{$^{1}$ Department of Mathematics and Statistics, College of Science, King Faisal University, PO.Box: 400  Al-Ahsa  31982,   Saudi Arabia}
 \email{\textcolor[rgb]{0.00,0.00,0.84}{sghobber@kfu.edu.sa}}

 \address{$^{2}$ LR11ES11 Analyse Math\'{e}matiques et Applications, Facult\'{e} des Sciences de Tunis, Universit\'{e} de Tunis El Manar, 2092 Tunis, Tunisia}
  \email{\textcolor[rgb]{0.00,0.00,0.84}{saifallah.ghobber@fst.utm.tn}}

\subjclass[2010]{Primary 45P05; Secondary  42C40.}
\keywords{Time-scale concentration, Hankel wavelet transform, uncertainty principles.}

\begin{abstract}
The aim of this paper is to prove a logarithmic and a Hirschman-Beckner entropic uncertainty principles for the Hankel wavelet transform. Then  we derive a general form of Heisenberg-type uncertainty inequality for this transformation.
\end{abstract}


\maketitle
%


\section{Introduction}
Let $d \ge1$ be the dimension, and let us denote by $\scal{\cdot,\cdot}$ the scalar product and by $|\cdot|$ the Euclidean norm on $\R^d$.
Define also the classical translation and dilation operators on $ L^2(\R^d)$ by
\begin{equation}
\tau_xg(t)=g(t-x)\quad  \mathrm{and} \quad \delta_ag(t)= a^{d/2}f\left( at \right),\qquad  a\in (0,\infty),\quad x,t\in\R^d.
\end{equation}
Then the Fourier transform is defined  for a function $g\in L^1(\R^d)\cap  L^2(\R^d)$   by:
\begin{equation}
\ff(g)(\xi)= \widehat{g}(\xi)=(2\pi)^{-d/2}\int_{\R^d}g(x) e^{-i\scal{x,\xi}}\d x,
\end{equation}
and it is extended from $L^1(\R^d)\cap L^2(\R^d)$ to $L^2(\R^d)$ in the usual way.
Moreover, if we take $\phi\in L^2(\R^d)$   an admissible wavelet, that satisfies the following admissibility condition,
  \begin{equation}
0<A_{\phi}:= \int_{0}^\infty | \widehat{ \phi}(a \xi)|^2\frac{\d a}{a}<\infty,\qquad \forall\xi\in\R^d\backslash\{0\},
\end{equation}
 then the continuous wavelet transform of a signal $f\in L^2(\R^d)$ is defined by (see \eg \cite{grochenig}),
\begin{equation}
\w_\phi g(a,x)=(2\pi)^{-d/2} \int_{\R^d} g(t) \overline{\phi_{a,x}(t)}\d t,   \qquad (a,x)\in\u,
\end{equation}
where $\u= (0,\infty)\times \R^d$ is the affine group, and $\phi_{a,x} $ is the wavelet atom defined, by
\begin{equation}
\phi_{a,x} = A_{\psi}^{-1/2} \tau_x\delta_a \phi .
\end{equation}
Wavelet atoms (which are the dilation with a scale parameter $a$ and the translation by the position (or time) parameter $x$ of admissible wavelets) has their energy well localized in position, while their Fourier transform is mostly concentrated in a limited frequency band.

\medskip

Now,  if $g(x)=f(|x|)$ is a radial function on $\R^d$, then $\widehat{g}(\xi)=\mathcal{H}_{d/2-1}(f)(|\xi|)$, where for $\alpha\ge-1/2$, $\H$ is the   Hankel  transform (also known as the Fourier-Bessel transform) defined by (see \eg \cite{Stempak}):
\begin{equation}
\H(f) (\xi)=\int_{\R_+} f(x)j_\alpha(x\xi)\d\mu_\alpha (x),\ \ \ \  \xi\in\R_+=[0,+\infty).
\end{equation}
In particular  $\mu_\alpha$ is the weight measure defined by $\d\mu_\alpha (x)=\frac{x^{2\alpha+1}}{2^{\alpha}\Gamma(\alpha+1)}\,\d x$  and $j_\alpha$ (see \eg \cite{Stempak, watson}) is the spherical Bessel function given by:
\begin{equation}
j_\alpha(x) =2^\alpha\Gamma(\alpha+1)\frac{J_\alpha (x)}{x^\alpha}
:=\Gamma(\alpha+1)\sum_{n=0}^{\infty}\frac{(-1)^n}{n!\Gamma(n+\alpha+1)}\left(\frac{x}{2}\right)^{2n},
\end{equation}
where $J_\alpha$ is the Bessel function of the first kind and $\Gamma$ is the gamma function.

\medskip

Throughout this paper, $\alpha$   will be a real number such that  $\alpha>-1/2$, since for $\alpha=-1/2$, we have  $\mu_{-1/2}$ is the Lebesgue measure and $ \mathcal{H}_{-1/2}$ is the  Fourier-cosine transform,
which is the Fourier transform $\ff$ restricted to even functions on $\R$.

\medskip

For $\alpha>-1/2$, let us recall the  Poisson representation formula (see \eg \cite[(1.71.6), p. 15]{S}):
\begin{equation}
j_\alpha(x)=\frac{\Gamma(\alpha+1)}{\Gamma\left(\alpha+\frac{1}{2}\right)\Gamma\left(\frac{1}{2}\right)}
\int_{-1}^1 (1-s^2)^{\alpha-1/2}\cos(sx) \d s.
\end{equation}
Therefore, $j_\alpha$ is bounded with $|j_\alpha(x)|\leq j_\alpha(0)=1$.
As a consequence,
\begin{equation}\label{eq:L1infty}
\norm{\H(f)}_\infty\leq \norm{f}_{1,\m},
\end{equation}
where $\norm{.}_\infty$ is the usual essential supremum norm on the space of essentially bounded functions   $L^\infty_\alpha(\R_+)$,
 and  for $1\leq p < \infty$, we denote by $L^p_\alpha(\R_+)=L^p(\R_+,\m)$ the Banach space consisting of measurable functions $f$ on $\R_+$ equipped with the norms:
\begin{equation}
\norm{f}_{p,\m}=\left( \int_{0}^\infty \abs{f(x)}^p\,\d \mu_\alpha(x)  \right)^{1/p}.
\end{equation}
It is also well-known (see   \cite{Stempak})  that the Hankel transform extends to an isometry on $ L^2_\alpha(\R_+)$:
\begin{equation}\label{parseval}
 \|\H(f) \|_{2,\m}= \|f \|_{2,\m}.
\end{equation}


Uncertainty principles in Fourier analysis set a limit to the possible concentration of a function and its Hankel transform in the time-frequency domain.
The most familiar form is the Heisenberg-type inequalities, in which concentration is measured by dispersions (see \cite{bowie, rosler, JAT}):  For all $f\in\L$, 
 \begin{equation}\label{eq:heisHTint}
 \|x f \|_{2,\m} \, \|\xi\,\H (f) \|_{2,\m}\geq  (\alpha+1)\|f \|_{2,\m}^2, 
\end{equation}
with equality, if and only if $f$ is a multiple of a suitable  Gaussian function.

\medskip

A little less known principles  consist of logarithmic uncertainty principles. In particular, for any even function $f$ in  the Schwartz space $ \ss_e(\R)$  we have (see \cite[Theorem 3.10]{omri}),
\begin{equation}\label{clupint}
 \int_{0}^\infty  \ln(x) \:|f(x)|^2 \d\m(x)+  \int_{0}^\infty  \ln(\xi) \: |\H(f)(\xi)|^2 \d\m(\xi)\ge \left(\ln(2)+\frac{\Gamma'(\frac{\alpha+1}{2})}{\Gamma(\frac{\alpha+1}{2})}\right)  \|f\|^2_{2,\m}.
\end{equation}
The proof of the last inequality is based on a Pitt-type inequality for the Hankel transform (see \cite[Theorem 3.9]{omri}), and from which we derive the following Heisenberg-type uncertainty inequality for the Hankel transform: For all even function $f$ in  the Schwartz space $ \ss_e(\R)$,
\begin{equation}\label{eqHcwthankelnewint}
 \left\| x\,f \right\|_{2,\m}  \left\| \xi  \, \H(f) \right\|_{2,\m} \ge  2\exp\left( \frac{\Gamma'(\frac{\alpha+1}{2})}{\Gamma(\frac{\alpha+1}{2})} \right) \,\|f\|^2_{2,\m},
\end{equation}
where \begin{equation}
 2\exp\left( \frac{\Gamma'(\frac{\alpha+1}{2})}{\Gamma(\frac{\alpha+1}{2})} \right)  \approx (\alpha+1), \qquad \mathrm{for} \quad \alpha\gg 1,
\end{equation}
which is the optimal constant in the sharp Heisenberg's Inequality \eqref{eq:heisHTint}.

Moreover, by using the sharp Hausdorff-Young inequality for the Hankel transform, the author in \cite[Theorem 3.3]{cubo}, proved the following entropic uncertainty inequality for the Hankel transform (in which concentration is measured by entropy, and which implies also Inequality \eqref{eq:heisHTint}), that is, for all nonzero $f\in \L$,
\begin{eqnarray}\label{ineqecHankelint}
  &- & \int_0^\infty |f(x)|^2 \,\ln\big(|f(x)|^2\big)\d \m(x)-\int_0^\infty |\H(f)(\xi)|^2 \,\ln\big(|\H(f)(\xi)|^2\big)\d\m(\xi) \nonumber\\
  & \ge& \left( (2\alpha+2)\ln\left(\frac{e}{2}\right)-2\ln\left(\|f \|_{2,\m}^2\right)\right)\|f \|_{2,\m}^2.
\end{eqnarray}

\medskip

As for  the Fourier transform, if we take a radial  admissible wavelet $\phi(t)= \psi (|t|)$ , then for all radial function $g(t)=f(|t|)$, its continuous  wavelet transform $\w_\phi g$ coincides with the Hankel wavelet transform $\W_{ \psi } f$ (with $\alpha=d/2-1$), defined by
 \begin{equation} \label{defCWTint}
\W_{ \psi } f(a,x)=c_{\psi}^{-1/2}\int_0^{ \infty} f(t)\,\overline{\tau_x^{\alpha}\left(\D_a\psi\right)}(t)\d\m(t),\qquad (a,x)\in\r=(0,\infty)\times[0,\infty),
 \end{equation}
 where 
$c_{\psi}$ is the admissibility condition given in \eqref{admicondintro}, and $\tau_x^{\alpha}$, $\D_a$ are the Hankel translation and the dilation operators given in \eqref{tau0}, \eqref{dt0} respectively.

\medskip

Wavelet theory is often seen as a relatively new concept for time-frequency analysis, rather than the Fourier, the Hankel and the windowed Fourier  transforms, because of  its advantage on better locality in the time-scale variations of a signal. However, the uncertainty principles   set a limit to the
maximal time-frequency or time-scale resolutions. These  uncertainty principles for the continuous wavelet transform, can be found in \cite{dahlke, Ghobber,HL, Medina, sing, WZK}, and those  for the Hankel wavelet transform can be found in \cite{cyrine, bhn}.

\medskip

The aim of this paper is to extend the uncertainty principles \eqref{clupint} and \eqref{ineqecHankelint} for the Hankel wavelet transform. In particular we prove the following Pitt-type inequality: For all $0\le \beta< \alpha+1$, and all function $f\in\ss_e(\R)$ such that $\W_{\psi}f(a,\cdot)\in \ss_e(\R)$, we have
 \begin{equation}  \label{pittcwtint}
    \int_{0}^\infty \int_{0}^\infty  a^{-2\beta} \, |\W_{\psi}f(a,x)|^2   \d\n(a,x)
 \le  C_{\alpha,\beta}(\psi)   \int_{0}^\infty \int_{0}^\infty x^{2\beta} \, |\W_{\psi}f(a,x)|^2 \d\n(a,x),
 \end{equation}
 where $ \d\n(a,x)= a^{2\alpha+1}\d a\d\m(x)$, and the constant $C_{\alpha,\beta}(\psi)$ is given in \eqref{pittcwtconstant}. This inequality allows to prove the following logarithmic uncertainty principle for the Hankel wavelet transform: For all function $f\in\ss_e(\R)$ such that $\W_{\psi}f(a,\cdot)\in \ss_e(\R)$, we have
 \begin{equation}  \label{lncwtint}
    \int_{\r}  \ln(a) \:\abs{\W_{\psi}f(a,x)}^2 \d\n(a,x)+    \int_{\r} \ln(x) \:\abs{\W_{\psi}f(a,x)}^2 \d\n(a,x)  \ge C_{\alpha}(\psi)  \|f\|^2_{2,\m},
 \end{equation}
 where $\r=(0,\infty)\times \R_+$, and the constant $C_{\alpha}(\psi) $ is given in \eqref{lncwtconstant}. Consequently, we derive the following Heisenberg-type uncertainty inequality:
 For all $f\in \ss_e(\R)$ such that $\W_{\psi}f(a,\cdot)\in \ss_e(\R)$, we have
\begin{equation}\label{eqHcwtint}
 \left\| a  \, \W_{\psi}f \right\|_{2,\n}  \left\| x  \, \W_{\psi}f \right\|_{2,\n} \ge   e^{C_\alpha(\psi)}  \,\|f\|^2_{2,\m},
\end{equation}
where $\left\|\cdot\right\|_{p,\n}$, ($p\ge1$) are the usual norms in the Banach spaces $L^p_\alpha(\r)=L^p(\r,\n)$ given by,
\begin{equation}
\|F\|_{p,\n}^p=\int_0^\infty\int_0^\infty |F(a,x)|^p\d\n(a,x),\qquad F\in L_\alpha^{p}(\r).
\end{equation}

\medskip

Finally, we prove the following Hirschman-Beckner entropic uncertainty inequality for the Hankel wavelet transform, that is,
for any    admissible wavelet $\psi$  such that $\|\psi \|_{2,\m}^2\le c_\psi$, we have for all  nonzero  function $f\in \L$,
\begin{equation} \label{ineqEint}
-\int_\r |\W_\psi f(a,x)|^2\ln \left(|\W_\psi f(a,x)|^2\right)\d\n(a,x)
\ge \|f \|_{2,\m}^2   \ln \left(\frac{c_\psi}{\|\psi\|_{2,\m}^2\|f \|_{2,\m}^2}\right).
\end{equation}
The last inequality implies a general form of Heisenberg-type uncertainty inequality for functions in $\L$, that is,
for all $s,\beta>0$, and any admissible  wavelet  $\psi$  such that $\|\psi \|_{2,\m}^2\le c_\psi$,   there exists a positive constant $C(s,\alpha,\beta)$ such that, for all nonzero function $f\in \L$,
\begin{equation} \label{heiproint}
\left\|a^{ s} \, \W_\psi f  \right\|_{2,\n}^\beta  \left\|x^\beta \,\W_\psi f \right \|_{2,\n}^s  \ge C(s,\alpha,\beta) \, \|f \|_{2,\m}^{s+\beta},
\end{equation}
where the constant $C(s,\alpha,\beta)$ is given in \eqref{heiproconstant}. This inequality improve a result proved in \cite{cyrine}, in which the constant involves the Mellin transform.

\medskip

As a side result, we prove that for any admissible  wavelet  $\psi$, and any function $f\in \L$, its Hankel wavelet transform $\W_\psi f$ belongs to $L^p_\alpha(\r)$, $p\ge2$, satisfying the following Lieb-type inequality
   \begin{equation}\label{eqpsiphinormint}
 \left\|\W_\psi  f\right\|_{p,\n}\le \left(\frac{\|\psi \|_{2,\m}^2}{ c_\psi  }\right)^{ \frac{1}{2}- \frac{1}{p}}   \|f\|_{2,\m}  .
\end{equation}
In particular if a nonzero function $f  \in \L$ is $(\eps,\psi)$-time-scale-concentrated in a subset $\Sigma\subset\r$ of finite measure, \ie
\begin{equation}\label{eqdefepsint}
 \int_{\r\backslash\Sigma } |\W_\psi f(a,x)|^2\d \n(a,x)\le \eps \|f\|_{2,\m}^{2},
\end{equation}
where $0\le\eps<1$, then $\Sigma$ satisfies  for every $p>2$,
\begin{equation}\label{eqliebthint}
\n(\Sigma)\ge  \frac{c_\psi}{\|\psi \|_{2,\m}^2} \, (1-\eps)^{\frac{p}{p-2}} .
\end{equation}
Consequently, if $\eps = 0$, then \eqref{eqdefepsint} implies that the Hankel wavelet transform is concentrated in $\Sigma$ and its support $\Sigma$ satisfies
\begin{equation}
 \n(\supp \W_\psi  f)\ge \frac{c_\psi}{\|\psi \|_{2,\m}^2},
\end{equation}
which means that, the support  of the Hankel wavelet transform of a nonzero function $f  \in \L$ cannot be too small.

\medskip

 The remainder of this paper is arranged as follows, in the second section we recall some useful harmonic analysis results associated with the  Hankel   and the Hankel wavelet transforms. Section 3 is devoted to the study of an uncertainty principle for the Hankel wavelet transform in subsets of small measures. In Section 4 we prove some logarithmic uncertainty principles for the Hankel wavelet transform, and derive some Heisenberg-type uncertainty inequalities.

\section{Preliminaries}
\subsection{Notation}

For a measurable subset $E$, we will write $E^c$ for its  complement, and we will denote by $\ss_e(\R)$ the Schwartz space, constituted by the even infinitely differentiable functions on the real line, rapidly decreasing together with all their derivatives.

\subsection{The Hankel transform}
In this subsection, we will recall some harmonic analysis results related to the Hankel transform that we shall use later (see \eg \cite{Stempak, watson}).
We denote by $\ell_{\alpha}$  the Bessel operator defined on $(0,\infty) $ by
\begin{equation}
\ell_{\alpha}(u)=u''+\frac{2\alpha+1}{r}u'.
\end{equation}
For all $\lambda\in\C$, the following system
\begin{equation}
\left\{\begin{array}{l}
\ell_{\alpha}(u)=-\lambda^2u,\\
u(0)=1,\; u'(0)=0;
\end{array}\right.
\end{equation}
admits a unique solution given by the modified Bessel function $x\mapsto j_{\alpha}(x\lambda)$, where
\begin{equation}
j_{\alpha}(z)=\frac{2^{\alpha}\Gamma(\alpha+1)}{z^{\alpha}}J_{\alpha}(z)=\Gamma(\alpha+1)\sum_{n=0}^{+\infty}\frac{(-1)^{n}}{n!\Gamma(\alpha+n+1)}
\left(\frac{z}{2}\right)^{2n}\ z\ \in \mathbb{C}.
\end{equation}

\medskip

The Hankel translation operator $\tau_x^{\alpha}$, $x\in\R_+$ is defined  by
\begin{equation}\label{tau0}
   \tau_x^{\alpha}(f)(y)=
 \frac{\Gamma(\alpha+1)}{\Gamma(\frac{1}{2})\Gamma(\alpha+\frac{1}{2})}\int_{0}^{\pi}f\left(\sqrt{x^2+y^2+2xy\cos(\theta)}\right)\sin(\theta)^{2\alpha}
 \d\theta ,
  \end{equation}
whenever the integral in the right-hand side is well defined. Then
\begin{equation}
 \tau_x^{\alpha}\big(j_{\alpha}(\lambda\cdot)\big)(y)=j_{\alpha}(\lambda  x)j_{\alpha}(\lambda y),\qquad\,x,y, \lambda\geq0.
\end{equation}
Moreover, for all $x,y>0$,
\begin{equation}
\label{tau integral}
 \tau_x^{\alpha}(f)(y)=\int_0^{ \infty}f(t)K_{\alpha}(t,x,y)\d\m(t),
\end{equation}
where $K_{\alpha}$ is the kernel given by
\begin{equation}
\label{W}
   K_{\alpha}(t,x,y)=\left\{\begin{array}{ll}
\frac{\Gamma^2(\alpha+1)}{\sqrt{\pi}2^{\alpha-1}\Gamma(\alpha+\frac{1}{2})}\frac{\left([(x+y)^2-t^2][t^2-(x-y)^2]\right)^{\alpha-1/2}}{(xyt)^{2\alpha}},&\mathrm{if}\;|x-y|<t<x+y,\\ \\
 0,& \mathrm{otherwise}.
        \end{array}
\right.
  \end{equation}
The kernel $K_{\alpha}$ is symmetric in the variables $t,x,y$, and satisfies
\begin{equation}
\label{1}
 \int_0^{ \infty}K_{\alpha}(t,x,y)\d\m(t)=1.
\end{equation}
From H\"older's inequality, Relations \eqref{tau integral} and \eqref{1}
we have, for every $f\in L^p_{\alpha}(\R_+)$, $p\in[1, \infty]$, the function $\tau_x^{\alpha}(f)\in L^p_{\alpha}(\R_+)$ and
\begin{equation}\label{tau}
 \|\tau_x^{\alpha}(f)\|_{p,\m}\leq\|f\|_{p,\m}.
\end{equation}
Moreover, for $f\in L^1_{\alpha}(\R_+)$, we have for all $x\in\R_+$,
\begin{equation}
 \int_0^{\infty}\tau_x^{\alpha}(f)(y)\d\m(y)=\int_0^{\infty}f(y)\d\m(y),
\end{equation}
and for every $f\in L^p_{\alpha}(\R_+)$, $p=1,2$,
  \begin{equation}
   \H(\tau_x^{\alpha}f)(\lambda)=j_{\alpha}(x\lambda)\H(f)(\lambda),\qquad\lambda\in\R_+.
  \end{equation}

\medskip

The convolution product of $f,g\in L^1_{\alpha}(\R_+)$ is defined by
\begin{equation}
 f\ast_{\alpha}g(x)=\int_0^{ \infty}\tau_x^{\alpha}(f)(y)g(y)\d\m(y),
\end{equation}
which is commutative and associative in $L^1_{\alpha}(\R_+)$. Then, for every $1\le p,q,r\le\infty$, such  that $1/p+1/q=1+1/r$, we have $ f\ast_{\alpha}g\in L^r_{\alpha}(\R_+)$, with
$$
 \|f\ast_{\alpha}g\|_{r,\m}\leq\|f\|_{p,\m}\|g\|_{q,\m}.
$$
In particular, for every $f\in L^1_{\alpha}(\R_+)$ and $g\in L^p_{\alpha}(\R_+)$, $p=1,2$, the function $f\ast_{\alpha}g$ is in $L^p_{\alpha}(\R_+)$, $p=1,2$ and
  \begin{equation}
   \H(f\ast_{\alpha}g)=\H(f)\H(g).
  \end{equation}
Moreover,  for every $f,g\in\L$, the function $f\ast_{\alpha}g\in\L$ if and only if $\H(f)\H(g)\in\L$, and then
\begin{equation}
 \H(f\ast_{\alpha}g)=\H(f)\H(g).
\end{equation}

 \subsection{The   Hankel wavelet transform}
Now, we will recall some preliminaries results of the   Hankel wavelet transform (HWT for short), which is first introduced in \cite{trimeche}, (see also \cite{cyrine, bhn,  peng, pathak}).

\medskip

The dilation operator $\D_a$ of a function $f\in\L$  is defined  by
 \begin{equation}\label{dt0}
  \D_a f(x)=a^{\alpha+1}f(ax),\qquad a>0.
 \end{equation}
 It satisfies the following properties.
\begin{properties}\
 \begin{enumerate}
  \item For all $f\in\L$  we have
  \begin{equation}\label{dt}
   \|\D_a f\|_{2,\m}=\|f\|_{2,\m},\qquad \H\left(\D_a f\right)=\D_{\frac{1}{a}}\H(f).
  \end{equation}\label{dtscal}
  \item For all $f,g\in\L$, we have
  \begin{equation}
   \langle\D_a f,g\rangle_{\m}=\langle f,\D_{\frac{1}{a}}g\rangle_{\m},
  \end{equation}
     where $\scal{\cdot,\cdot}_{\m}$ is the usual inner product on the Hilbert space $\L$.
\item For all $x\geq0$, we have
\begin{equation}\label{dttau}
 \D_a\tau_x^{\alpha}=\tau_{\frac{x}{a}}\D_a.
\end{equation}
 \end{enumerate}
\end{properties}
\begin{definition}
 A nonzero function $\psi\in L^1_\alpha(\R_+)\cap L^\infty_\alpha(\R_+)$ is said to be an admissible   wavelet, if it satisfies,
 \begin{equation}\label{admicondintro}
  0<c_\psi=\int_0^{ \infty}|\H(\psi)(a)|^2\frac{\d a}{a}< \infty.
 \end{equation}
\end{definition}
Moreover, if  $\psi$ is an  admissible   wavelet, then $\psi   \in L^p_\alpha(\R_+)$, for all $1\le p\le \infty$, since for $p\in(1,\infty)$, we have
 \begin{equation}
\|\psi \|_{p,\m}^p=\int_{\R_+} |\psi(x)| |\psi(x)|^{p-1}\d\m(x)\le \|\psi \|_{\infty}^{p-1} \|\psi \|_{1,\m}.
 \end{equation}
\begin{definition}
 Let $\psi$ be an admissible  wavelet. Then the Hankel wavelet transform $\W_\psi$ is defined on $\L$ by
 \begin{equation}\label{defCWT}
  \W_\psi f(a,x)=\int_0^{ \infty}f(t)\overline{\psi_{a,x}^{\alpha}(t)}\d\m(t),\qquad (a,x)\in\r,
 \end{equation}
 where, $\psi_{a,x}^{\alpha}=c_\psi^{-1/2} \tau_x^{\alpha}\left(\D_a\psi\right)$.
\end{definition}
Equality \eqref{defCWT} can be also   written as
\begin{equation}
 \W_\psi f(a,x) =c_\psi^{-1/2}f\ast_{\alpha}\D_a(\overline{\psi})(x) =\scal{ f,\psi_{a,x}}_{\m}.
 \end{equation}


Moreover, the Hankel wavelet transform satisfies the following properties.
\begin{properties}
  Let $\psi$ be an admissible   wavelet. Then for every $f\in\L$, we have:
  \begin{enumerate}
   \item The function $\W_\psi f $  is bounded, with
 \begin{equation}\label{norminfty}
  \|\W_\psi f\|_{\infty}\leq c_\psi^{-1/2}\|f\|_{2,\m}\|\psi\|_{2,\m},
 \end{equation}
 where  $\|\cdot\|_{\infty}$ is the essential supremum norm in the space $L_\alpha^{\infty}(\r)$.
 \item The function $\W_\psi f $  belongs to $L_\alpha^{2}(\r)$, and satisfies the following Plancherel formula,
   \begin{equation} \label{plancherel}
  \|\W_\psi f\|_{2,\n}= \|f\|_{2,\m}.
 \end{equation}
\item For every $(a,x)\in\r$,
\begin{equation}\label{dtt}
 \W_\psi(\D_\lambda f)(a,x)=\W_\psi f\left(\frac{a}{\lambda}, \lambda x\right),\qquad \lambda>0.
\end{equation}
\item For every $(a,x)\in\r$, we have
\begin{equation}\label{cwti}
  \H\left(\W_\psi f(a,\cdot)\right)(\xi)=\frac{c_\psi^{-1/2}}{a^{\alpha+1}}\H(f)\overline{\H(\psi)}\left(\frac{\xi}{a}\right).
 \end{equation}
\end{enumerate}
\end{properties}

\section{Uncertainty principles for the HWT in subsets of small measures }
In this section $\Sigma$ will be a  subset in $\r$ of finite measure $0<\n(\Sigma)< \infty$,  and $\psi$  will be   an admissible wavelet. The aim of this section is to prove an uncertainty results limiting the concentration of the HWT in subsets of small measures (see \eg \cite{DS} and \cite[Theorem 3.3.3]{grochenig} for related results for the Fourier and the windowed Fourier transforms).
\begin{definition} Let $0\le\eps<1$. Then,  
  a  nonzero   function $f  \in \L$ is $(\eps,\psi)$-time-scale-concentrated in  $\Sigma$  if,
\begin{equation}\label{eqdefeps}
 \int_{\Sigma^c} |\W_\psi f(a,x)|^2\d \n(a,x)\le \eps \|f\|_{2,\m}^{2}.
\end{equation}
    \end{definition}
 Notice also that, if  $\eps = 0$ in  \eqref{eqdefeps}, then $\Sigma$ will be  the exact support of $\W_\psi f $ (denoted by $\supp \W_\psi f$), so that   Inequality \eqref{eqdefeps} with  $0<\eps<1$,  means that $\W_\psi f $ is ``practically zero'' outside $\Sigma$, and then $\Sigma$ may be considered as the ``essential'' support of $\W_\psi f $.

  We denote by $\l(\eps, \Sigma, \psi)$ the set of all functions in $f\in \L$ that are $(\eps,\psi)$-time-scale-concentrated in $\Sigma$, where $\eps\in(0,1)$.
 Then we have the following Donoho-Stark type uncertainty relation.  (see \cite[Proposition 3.1]{cyrine}).

\begin{proposition}\
If   $f$ is in $\l(\eps,\Sigma,\psi)$, then the essentiel support of its HWT satisfies
\begin{equation}\label{eqds}
\n(\Sigma)\ge  \frac{c_\psi}{\|\psi \|_{2,\m}^2} \; (1-\eps) .
\end{equation}
\end{proposition}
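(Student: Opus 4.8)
The plan is to exploit the two norm bounds that the excerpt has already supplied for the Hankel wavelet transform: the pointwise bound $\norm{\W_\psi f}_\infty \le c_\psi^{-1/2}\norm{f}_{2,\m}\norm{\psi}_{2,\m}$ from \eqref{norminfty}, and the Plancherel identity $\norm{\W_\psi f}_{2,\n} = \norm{f}_{2,\m}$ from \eqref{plancherel}. The whole argument is a standard Donoho--Stark energy-counting estimate: most of the $L^2$-energy of $\W_\psi f$ lives on $\Sigma$, but on $\Sigma$ the $L^2$-norm is controlled by the product of the $L^\infty$-norm and $\n(\Sigma)^{1/2}$, and feeding in the two bounds above forces $\n(\Sigma)$ to be large.

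Concretely, I would start from the concentration hypothesis \eqref{eqdefeps}, which says that the energy of $\W_\psi f$ outside $\Sigma$ is at most $\eps\norm{f}_{2,\m}^2$. Combining this with the Plancherel formula \eqref{plancherel} gives a lower bound on the energy carried inside $\Sigma$:
\begin{equation}
\int_\Sigma |\W_\psi f(a,x)|^2 \d\n(a,x) = \norm{\W_\psi f}_{2,\n}^2 - \int_{\Sigma^c}|\W_\psi f(a,x)|^2\d\n(a,x) \ge (1-\eps)\norm{f}_{2,\m}^2.
\end{equation}
Next I would bound the same integral from above by pulling out the pointwise sup and integrating the constant over $\Sigma$, namely
\begin{equation}
\int_\Sigma |\W_\psi f(a,x)|^2 \d\n(a,x) \le \norm{\W_\psi f}_\infty^2 \, \n(\Sigma) \le \frac{\norm{\psi}_{2,\m}^2}{c_\psi}\norm{f}_{2,\m}^2 \, \n(\Sigma),
\end{equation}
where the last step inserts the bound \eqref{norminfty}. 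Chaining the two displays and cancelling the common factor $\norm{f}_{2,\m}^2$ (which is nonzero since $f$ is nonzero) yields exactly $\n(\Sigma) \ge \frac{c_\psi}{\norm{\psi}_{2,\m}^2}(1-\eps)$, which is \eqref{eqds}.

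Honestly there is no serious obstacle here; the result is almost purely formal once \eqref{norminfty} and \eqref{plancherel} are in hand. The only point that requires a little care is the very first splitting step, where one must be sure that $\W_\psi f$ genuinely lies in $L^2_\alpha(\r)$ so that the integrals are finite and the decomposition $\norm{\W_\psi f}_{2,\n}^2 = \int_\Sigma + \int_{\Sigma^c}$ is legitimate — but this is precisely guaranteed by property \eqref{plancherel}. One should also note the implicit standing assumption that the ambient admissible-wavelet hypotheses place $\psi$ in $L^2_\alpha(\R_+)$ so that $\norm{\psi}_{2,\m}$ is finite and positive, ensuring the constant on the right-hand side is a genuine positive number; this follows from the remark after \eqref{admicondintro} that an admissible wavelet belongs to every $L^p_\alpha(\R_+)$.
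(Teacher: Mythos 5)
Your proof is correct and is essentially the paper's own argument: both use the Donoho--Stark energy count, bounding $\int_\Sigma |\W_\psi f|^2\d\n$ below by $(1-\eps)\|f\|_{2,\m}^2$ via Plancherel \eqref{plancherel} and the concentration hypothesis, and above by $\|\W_\psi f\|_\infty^2\,\n(\Sigma)$ via \eqref{norminfty}. The only cosmetic difference is that the paper normalizes $\|f\|_{2,\m}=1$ at the outset while you carry the norm along and cancel it at the end.
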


\begin{proof}
Let $f\in\l(\eps,\Sigma,\psi)$ such that $\|f \|_{2,\m}=1$.
By \eqref{norminfty}, we have for all $(a,x)\in\r$,
  $$
  |\W_\psi f(a,x)|  \le   \frac{\|\psi \|_{2,\m}}{\sqrt{c_\psi}},
  $$
then
$$
 \int_{\Sigma}\abs{\W_\psi  f (a,x)}^{2} \d\n(a,b)\le \|\W_\psi  f\|_\infty^2 \n(\Sigma) \le \n(\Sigma) \frac{\|\psi \|_{2,\m}^2}{c_\psi}.
$$
Now by Plancherel formula \eqref{plancherel},
 \begin{equation}\label{eqsigmaeps}
 \int_{\Sigma }\abs{\W_\psi  f (a,x)}^{2} \d\n(a,x)=  1-\int_{\Sigma^c }\abs{\W_\psi  f (a,x)}^{2} \d\n(a,x)\ge  1-\eps.
 \end{equation}
This allows   to conclude.
\end{proof}
In particular, when $\eps = 0$, then \eqref{eqds} implies that the Hankel wavelet transform is concentrated in $\Sigma$ and its support satisfies
\begin{equation}
 \n(\supp \W_\psi  f)\ge \frac{c_\psi}{\|\psi \|_{2,\m}^2}.
\end{equation}
This means that, the support or the  essential support  of the Hankel  wavelet transform cannot be too small.
On the other hand if we take a small subset $\Sigma\subset \r$ such that $\n(\Sigma)< \frac{c_\psi}{\|\psi \|_{2,\m}^2} ,$ then (see \eg \cite[Proposition 3.1]{bhn}), for all $f\in  \L $,
\begin{equation}\label{AMw}
 \int_{\Sigma^c} |\W_\psi f(a,b)|^2\d \n(a,b)\ge \left( 1-\frac{\|\psi \|_{2,\m}^2}{c_\psi} \n(\Sigma)\right)    \|f\|_{2,\m}^2 .
\end{equation}
 In particular if the Hankel wavelet transform is supported in $\Sigma$ (that is  $\supp \W_\psi f\subset \Sigma$), such that $\n(\Sigma)< \frac{c_\psi}{\|\psi \|_{2,\m}^2}$, then $f $ is the zero function.

\begin{theorem}\  \label{procwt}
Let $p\ge 1$ and let $\psi$, $\phi$ two admissible wavelets. Then for all $f,g\in \L$, the function $\W_\psi  f\, \W_\phi  g $
belongs to  $L^p_\alpha(\r)$, and
\begin{equation}\label{eqpsiphinorm0}
  \|\W_\psi  f \, \W_\phi  g  \|_{p,\n}^p\le  \left(\frac{\|\psi \|_{2,\m}\|\phi \|_{2,\m}}{\sqrt{c_\psi c_\phi}}\right)^{p- 1}   \|f\|_{2,\m}^p\|g\|_{2,\m}^p .
\end{equation}
\end{theorem}
\begin{proof}
 By Cauchy-Schwartz's inequality and the Plancherel formula \eqref{plancherel}, we have
 $$
 \int_\r |\W_\psi  f (a,x)  \W_\phi  g (a,x) |\d\n(a,x)\le \|\W_\psi  f\|_{2,\n} \|\W_\phi  g\|_{2,\n}=  \|f\|_{2,\m}\|g\|_{2,\m}.
 $$
This implies that  the function $ \W_\psi f\,\W_\phi  g$ belongs to $L^1_\alpha(\r)$, and
\begin{equation}\label{eqnL1}
\|\W_\psi  f\,\W_\phi  g\|_{1,\n}   \le  \|f\|_{2,\m}\|g\|_{2,\m}.
\end{equation}
On the other hand, from \eqref{norminfty}, we have for $(a,x)\in\r$,
$$
|\W_\psi  f (a,x)  \W_\phi  g (a,x) |\le  \|f\|_{2,\m}\|g\|_{2,\m} \frac{\|\psi \|_{2,\m}\|\phi \|_{2,\m}}{\sqrt{c_\psi c_\phi}}.
$$
Therefore,  the function $ \W_\psi f\,\W_\phi  g$ belongs to $L^\infty_\alpha(\r)$, and
\begin{equation}\label{eqnLinfty}
\|\W_\psi  f\,\W_\phi  g\|_{\infty}   \le  \|f\|_{2,\m}\|g\|_{2,\m} \frac{\|\psi \|_{2,\m}\|\phi \|_{2,\m}}{\sqrt{c_\psi c_\phi}}.
\end{equation}
Thus by \eqref{eqnL1}, \eqref{eqnLinfty} and  by an interpolation theorem we obtain the desired result. 
\end{proof}
\begin{remark}
  Theorem \ref{procwt} implies in particular that $\W_\psi  f\in L^p_\alpha(\r)$,   $p\ge2$, with
 \begin{equation}\label{eqpsiphinorm}
 \|\W_\psi  f\|^{p}_{p,\n}\le \left(\frac{\|\psi \|_{2,\m}^2}{ c_\psi  }\right)^{p/2- 1}   \|f\|_{2,\m}^{p} .
\end{equation}
This result can be considered as a Lieb-type inequality \cite{lieb}.
\end{remark}
\begin{corollary}\
Let   $f\in\l(\eps,\Sigma,\psi)$. Then 
 for every $p>2$,
\begin{equation}\label{eqliebth}
\n(\Sigma)\ge  \frac{c_\psi}{\|\psi \|_{2,\m}^2} \, (1-\eps)^{\frac{p}{p-2}} .
\end{equation}
\end{corollary}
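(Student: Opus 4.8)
The plan is to combine the Lieb-type inequality from Remark~2.1 (equation \eqref{eqpsiphinorm}) with the concentration hypothesis \eqref{eqdefeps} and a H\"older-type estimate that separates the integral of $\abs{\W_\psi f}^2$ over $\Sigma$ from the full $L^p$-norm. Without loss of generality I would normalize so that $\|f\|_{2,\m}=1$, since both sides of \eqref{eqliebth} scale consistently in $\|f\|_{2,\m}$. By the Plancherel formula \eqref{plancherel} and the concentration assumption, exactly as in \eqref{eqsigmaeps}, we have
\begin{equation}
\int_\Sigma \abs{\W_\psi f(a,x)}^2 \d\n(a,x) \ge 1-\eps.
\end{equation}

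The key step is to bound this same quantity from above using H\"older's inequality on $\Sigma$. For $p>2$ the conjugate splitting of the exponent $p/2$ gives, applying H\"older with exponents $p/2$ and $p/(p-2)$,
\begin{equation}
\int_\Sigma \abs{\W_\psi f(a,x)}^2 \d\n(a,x)
\le \left(\int_\Sigma \abs{\W_\psi f(a,x)}^{p}\d\n(a,x)\right)^{2/p} \n(\Sigma)^{1-2/p}.
\end{equation}
The first factor is at most $\|\W_\psi f\|_{p,\n}^2$, so by the Lieb-type inequality \eqref{eqpsiphinorm} (with $\|f\|_{2,\m}=1$) it is bounded by $\bigl(\|\psi\|_{2,\m}^2/c_\psi\bigr)^{1-2/p}$. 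Chaining the two displays yields
\begin{equation}
1-\eps \le \left(\frac{\|\psi\|_{2,\m}^2}{c_\psi}\right)^{1-2/p}\n(\Sigma)^{1-2/p}.
\end{equation}

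Finally I would isolate $\n(\Sigma)$ by raising both sides to the power $\frac{p}{p-2}=\bigl(1-\tfrac{2}{p}\bigr)^{-1}$, which is positive since $p>2$, obtaining
\begin{equation}
\n(\Sigma) \ge \frac{c_\psi}{\|\psi\|_{2,\m}^2}\,(1-\eps)^{\frac{p}{p-2}},
\end{equation}
and then reinstating the general normalization (the factors $\|f\|_{2,\m}$ cancel throughout, so no extra constant appears). I do not anticipate a genuine obstacle here: the argument is a routine concentration-versus-$L^p$ comparison. The only point requiring mild care is the direction of the H\"older split and verifying that the exponent $\frac{p}{p-2}$ is exactly what makes the $\n(\Sigma)$ factors match, which is precisely why the hypothesis $p>2$ is needed (so that both $1-2/p>0$ and the final exponent is finite and positive).
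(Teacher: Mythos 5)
Your proof is correct and is essentially the paper's own argument: normalize $\|f\|_{2,\m}=1$, use Plancherel plus the concentration hypothesis to get $\int_\Sigma \abs{\W_\psi f}^2\d\n \ge 1-\eps$, apply H\"older with exponents $p/2$ and $p/(p-2)$, and bound the $L^p$-norm via the Lieb-type inequality \eqref{eqpsiphinorm}. The only cosmetic difference is that the paper applies H\"older directly with the full-space $L^p$-norm $\left(\int_\r \abs{\W_\psi f}^p\d\n\right)^{2/p}$, whereas you insert the trivial intermediate bound $\int_\Sigma \le \int_\r$; the exponent bookkeeping and the final rearrangement are identical.
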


\begin{proof}
Let  $f\in\l(\eps,\Sigma,\psi)$, such that $\|f\|_{2,\m}=1$. Then by H\"{o}lder's inequality,
$$
\int_\Sigma |\W_\psi  f(a,x)|^2\d\n(a,x)\le \n(\Sigma)^{1-2/p} \left(\int_\r |\W_\psi  f(a,x)|^{p}\d\n(a,x)\right)^{2/p}.
$$
Thus by \eqref{eqsigmaeps} and \eqref{eqpsiphinorm}, we have
 $$
1-\eps\le\int_\Sigma |\W_\psi  f(a,x)|^2\d\n(a,x)\le \n(\Sigma)^{1-2/p} \left(\frac{\|\psi \|_{2,\m}^2}{ c_\psi }\right)^{1-2/p}.
 $$
 The proof is complete.
\end{proof}


\section{Logarithmic uncertainty principles for the HWT}

\subsection{On Heisenberg-type uncertainty inequalities for the HWT}
The  Heisenberg-type uncertainty principle for the Hankel tranform states \cite[Theorem 2.1]{JAT}   (see also \cite{bowie, rosler}): For all $f\in  \L$,
\begin{equation}\label{cHpw2}
  \|  x\, f  \|_{2,\m}^2+   \|  \xi \,  \H(f)\|_{2,\m }^2 \ge  (2\alpha+2)   \|  f  \|_{2,\m}^2   ,
\end{equation}
and by a well-known dilation argument (see \cite[Corollary 2.2]{JAT}),  we obtain
\begin{equation}\label{cHpw}
  \|  x\, f  \|_{2,\m}\|  \xi \,  \H(f)\|_{2,\m } \ge (\alpha+1)   \|f\|_{2,\m}^2  .
\end{equation}

Some Heisenberg-type uncertainty inequalities for the HWT were proved in \cite{cyrine}. In this subsection we will   prove the
 following  Heisenberg-type uncertainty inequality, comparing the concentration of $\W_{\psi}f$ in position (with respect to the $x$-variable) and the concentration of $\H(f)$ in frequency (with respect to the $\xi$-variable).

\begin{theorem}\ \label{thH}
For all $f\in  \L $, we have
\begin{equation}\label{cHpw3}
  \| x \,  \W_{\psi}f  \|_{2,\n}^2+   \|    \xi \,  \H(f) \|_{2,\m }^2 \ge (2\alpha+2)    \|  f  \|_{2,\m}^2  ,
\end{equation}
or equivalently,
\begin{equation}\label{cHpw4}
  \| x  \,  \W_{\psi}f  \|_{2,\n} \,  \|\xi \,  \H(f)   \|_{2,\m } \ge (\alpha+1)    \|  f  \|_{2,\m}^2  .
\end{equation}
\end{theorem}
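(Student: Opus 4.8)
The plan is to reduce everything to the known Heisenberg inequality \eqref{cHpw2} for the Hankel transform, applied \emph{fiberwise} in the scale variable $a$. For fixed $a>0$, set $g_a(x):=\W_\psi f(a,x)$ and regard it as a function of the single variable $x$; since $\W_\psi f\in L^2_\alpha(\r)$, Fubini guarantees $g_a\in\L$ for almost every $a$. Applying \eqref{cHpw2} to $g_a$ yields
\begin{equation*}
\|x\,g_a\|_{2,\m}^2+\|\xi\,\H(g_a)\|_{2,\m}^2\ge(2\alpha+2)\,\|g_a\|_{2,\m}^2,
\end{equation*}
and I would then multiply through by $a^{2\alpha+1}$ and integrate over $a\in(0,\infty)$. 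The measure $\d\n(a,x)=a^{2\alpha+1}\d a\,\d\m(x)$ is designed precisely so that the first term becomes $\|x\,\W_\psi f\|_{2,\n}^2$, while the right-hand side becomes $(2\alpha+2)\|\W_\psi f\|_{2,\n}^2=(2\alpha+2)\|f\|_{2,\m}^2$ by the Plancherel formula \eqref{plancherel}.

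The crux of the argument is the middle term. Here I would invoke the identity \eqref{cwti}, namely $\H(g_a)(\xi)=c_\psi^{-1/2}a^{-(\alpha+1)}\H(f)(\xi)\overline{\H(\psi)}(\xi/a)$, so that after multiplying by $a^{2\alpha+1}$ this term reads
\begin{equation*}
c_\psi^{-1}\int_0^\infty\!\!\int_0^\infty \frac{\xi^2}{a}\,|\H(f)(\xi)|^2\,\bigl|\H(\psi)(\xi/a)\bigr|^2\,\d\m(\xi)\,\d a.
\end{equation*}
Switching the order of integration by Fubini and performing the substitution $u=\xi/a$ in the inner integral turns $\int_0^\infty a^{-1}|\H(\psi)(\xi/a)|^2\,\d a$ into $\int_0^\infty u^{-1}|\H(\psi)(u)|^2\,\d u=c_\psi$, exactly the admissibility constant from \eqref{admicondintro}. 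The two factors of $c_\psi$ cancel, leaving precisely $\|\xi\,\H(f)\|_{2,\m}^2$. This fiber-by-fiber collapse is the step I expect to be the main (though not deep) obstacle, since it is where the wavelet normalization must conspire with the weight $a^{2\alpha+1}$ built into $\n$; the remaining analysis (Fubini, positivity of integrands) is routine once the substitution is set up. Combining the three pieces gives \eqref{cHpw3}.

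Finally, to pass from the additive form \eqref{cHpw3} to the multiplicative form \eqref{cHpw4}, I would use the standard dilation-optimization trick. Replacing $f$ by $\D_\lambda f$ and using the scaling law \eqref{dtt} together with $\H(\D_\lambda f)=\D_{1/\lambda}\H(f)$ from \eqref{dt}, a change of variables shows
\begin{equation*}
\|x\,\W_\psi(\D_\lambda f)\|_{2,\n}^2=\lambda^{-2}\|x\,\W_\psi f\|_{2,\n}^2,\qquad \|\xi\,\H(\D_\lambda f)\|_{2,\m}^2=\lambda^{2}\|\xi\,\H(f)\|_{2,\m}^2,
\end{equation*}
while $\|\D_\lambda f\|_{2,\m}=\|f\|_{2,\m}$. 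Inserting these into \eqref{cHpw3} gives $\lambda^{-2}A+\lambda^2 B\ge(2\alpha+2)\|f\|_{2,\m}^2$ for every $\lambda>0$, where $A=\|x\,\W_\psi f\|_{2,\n}^2$ and $B=\|\xi\,\H(f)\|_{2,\m}^2$. Minimizing the left-hand side over $\lambda$ (the minimum equals $2\sqrt{AB}$, attained at $\lambda^2=\sqrt{A/B}$) produces $\sqrt{AB}\ge(\alpha+1)\|f\|_{2,\m}^2$, which is exactly \eqref{cHpw4}; the degenerate cases $A=0$, $B=0$, or infinite values are immediate and can be dispatched separately.
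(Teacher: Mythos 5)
Your proof is correct and follows essentially the same route as the paper: apply the Hankel--Heisenberg inequality \eqref{cHpw2} fiberwise to $\W_\psi f(a,\cdot)$, use \eqref{cwti} together with the admissibility condition \eqref{admicondintro} to collapse the frequency term after integrating against $a^{2\alpha+1}\,\d a$, and invoke Plancherel \eqref{plancherel}, then pass to the product form \eqref{cHpw4} via the dilation $f\mapsto\D_\lambda f$ and optimization over $\lambda$. The paper's proof is identical in all essentials, so there is nothing to add.
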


\begin{proof}
It is clear that, if Inequality \eqref{cHpw4} holds then Inequality \eqref{cHpw3} is trivial, since $s^2+t^2\ge 2st$. Therefore it is enough
to prove Inequality \eqref{cHpw3}, and that \eqref{cHpw3} implies \eqref{cHpw4}.

Heisenberg's inequality \eqref{cHpw2} for the function $\W_{\psi}f(a,\cdot) \in  \L $  leads to:
\begin{equation*}
\int_{0}^\infty x^2 |\W_{\psi}f(a,x)|^2\d\m(x)+ \int_{0}^\infty  \xi^2  |\H(\W_{\psi}f(a,\cdot))|^2\d\m(\xi) \ge  (2\alpha+2) \|\W_{\psi}f(a,\cdot) \|_{2,\m}^2.
\end{equation*}
Then by \eqref{cwti},
\begin{equation*}
\int_{0}^\infty x^2   |  \W_{\psi}f(a,x)|^2\d\m(x)+ \int_{0}^\infty|\xi|^2|\H(f)(\xi)|^2  \frac{| \H(\psi) (\xi/a)|^2}{c_\psi\, a^{2\alpha+2}}\d\m(\xi) \ge (2\alpha+2) \|\W_{\psi}f(a,\cdot) \|_{2,\m}^2.
  \end{equation*}
Integrating with respect to $a^{2\alpha+1}\d a$, we obtain from the admissibility condition \eqref{admicondintro} and Plancherel formula \eqref{plancherel},
\begin{equation*}
\int_{0}^\infty \int_{0}^\infty x^2   |  \W_{\psi}f(a,x)|^2\d\n(a,x)+   \int_{0}^\infty \xi^2|\H(f)(\xi)|^2   \d\m(\xi) \ge 2(\alpha+1)    \|f \|_{2,\m}^2  .
\end{equation*}
This prove \eqref{cHpw3}. Now by replacing $f$ by $\D_{\lambda}f$ in the previous inequality, we obtain by \eqref{dt} and \eqref{dtt},
 \begin{equation*}
\int_{0}^\infty \int_{0}^\infty x^2   \left|  \W_{\psi}f\left(\frac{a}{\lambda}, \lambda x\right)\right|^2\d\n(a,x)+
   \lambda^{-2\alpha-2}  \int_{0}^\infty  \xi^2\left|\H(f)\left(\frac{\xi}{\lambda}\right)\right|^2   \d\m(\xi) \ge  2(\alpha+1)\|f \|_{2,\m}^2.
 \end{equation*}
Thus, by a suitable change of variables, we get
\begin{equation*}
\lambda^{-2}   \int_{0}^\infty \int_{0}^\infty x^2   |  \W_{\psi}f(a,x)|^2\d\n(a,x)+  \lambda^{ 2} \int_{0}^\infty  \xi^2|\H(f)(\xi)|^2   \d\m(\xi) \ge  (2\alpha+2)   \|f \|_{2,\m}^2.
\end{equation*}
Minimizing the left-hand side of that inequality over $\lambda > 0$, we obtain
\begin{equation*}
 \left(\int_{0}^\infty\int_{0}^\infty x^2   |  \W_{\psi}f(a,x)|^2\d\n(a,x)\right)^{1/2}  \left(\int_{0}^\infty   \xi^2|\H(f)(\xi)|^2   \d\m(\xi)\right)^{1/2} \ge (\alpha+1)\|f \|_{2,\m}^2.
\end{equation*}
This show \eqref{cHpw4} and the proof is complete.
\end{proof}

\begin{remark}
We have, for all $s\in\R $ (see \eg \cite[Equality (4.9)]{cyrine}),
\begin{equation}\label{mellin}
 \|\xi^s \,  \H(f)   \|_{2,\m }= \sqrt{\frac{c_\psi}{\mathcal{M}\left(|\H(\psi)|^2 \right)(2s)} } \| a^s  \,  \W_{\psi}f  \|_{2,\n},
\end{equation}
where $\mathcal{M}$ is the Mellin transform defined by
\begin{equation}
\mathcal{M}(f)(z)=\int_0^\infty x^{-z} f(x) \frac{\d x}{x}.
\end{equation}
Then by \eqref{cHpw}, \eqref{mellin}, we have the following  Heisenberg-type uncertainty inequality, comparing the concentration of $f$ in position (with respect to the $x$-variable) and  the concentration of $\W_{\psi}f$ in scale   (with respect to the $a$-variable),
\begin{equation}
    \|x \,  f  \|_{2,\m }  \| a   \,  \W_{\psi}f  \|_{2,\n}\ge \sqrt{c_\psi^{-1} \mathcal{M}\left(|\H(\psi)|^2 \right)(2)} \, (\alpha+1)\|f \|_{2,\m}^2,
\end{equation}
and by \eqref{cHpw4}, \eqref{mellin}, we obtain the following Heisenberg-type uncertainty inequality which seems the most natural one, comparing the concentration of $\W_{\psi}f$ in position (with respect to the $x$-variable) and the concentration of $\W_{\psi}f$ in scale (with respect to the $a$-variable).
\begin{equation}\label{eqheips}
  \| a   \,  \W_{\psi}f  \|_{2,\n}  \| x   \,  \W_{\psi}f  \|_{2,\n}\ge \sqrt{c_\psi^{-1} \mathcal{M}\left(|\H(\psi)|^2 \right)(2)}  \, (\alpha+1)\|f \|_{2,\m}^2,
\end{equation}

More generally, by following the same way of Theorem \ref{thH}, and by using \eqref{mellin}, and the following general form of Heisenberg-type uncertainty principle for the Hankel transform,
 \begin{equation}
  \| x^s   \,   f  \|_{2,\m}^\beta  \| x^\beta   \,  \H(f)  \|_{2,\m}^s\ge c(s,\alpha,\beta)     \|f \|_{2,\m}^{s+\beta}\qquad s,\beta>0.
\end{equation}
the author in  \cite[Theorem 4.2]{cyrine} has deduced  that, for any $s,\beta>0$
there exists a constant $c(s,\alpha,\beta)$ such that for all $f\in\L$,
   \begin{equation}\label{heimillen}
  \| a^s   \,  \W_{\psi}f  \|_{2,\n}^\beta  \| x^\beta   \,  \W_{\psi}f  \|_{2,\n}^s\ge c(s,\alpha,\beta) \left(c_\psi^{-1} \mathcal{M}\left(|\H(\psi)|^2 \right)(2s)\right)^{\beta/2}  \|f \|_{2,\m}^{s+\beta}.
\end{equation}

In this paper we are interested in Heisenberg-type uncertainty inequalities like \eqref{eqheips} and \eqref{heimillen} (but without using the Mellin transform), limiting the concentration of $\W_{\psi}f$ in position and scale. A  related results for the windowed Fourier and windowed Hankel transforms were proved in \cite{BDJ, GO}, limiting the concentration in position and frequency.
\end{remark}


\subsection{Pitt-type inequality for the HWT}
The aim of this subsection is to prove an analogue of Pitt's inequality for the Hankel wavelet transform. Recall that,
the  Pitt-type inequality for the Hankel transform (see \cite[Theorem 3.9]{omri}) states: For every $f$ in the Schwartz class $\ss_e(\R)$ and $0\le \beta<  \alpha+1$,
\begin{equation}\label{cpitt}
  \left\| \xi^{-\beta}\H(f) \,\right\|_{2,\m }
    \le C_{\alpha,\beta}   \left\|x^{\beta}f\right\|_{2,\m},
\end{equation}
where 
\begin{equation}
C_{\alpha,\beta} = 2^{-\beta}\frac{\Gamma\left(\frac{\alpha-\beta+1}{2}\right)}{\Gamma\left(\frac{\alpha+\beta+1}{2}\right)} .
\end{equation}
As noted in \cite{omri}, the constant $C_{s,\alpha} $ is the best (smallest) constant for Pitt's inequality \eqref{cpitt}.  Then we derive the following Pitt-type inequality for the Hankel wavelet transform.
\begin{theorem}\
For all function $f\in\ss_e(\R)$ such that $\W_{\psi}f(a,\cdot)\in \ss_e(\R)$, we have
 \begin{equation}  \label{pittcwt}
  \left\|a^{-\beta} \, \W_{\psi}f   \right\|_{2,\n}
 \le  C_{\alpha,\beta}(\psi) \left\|x^{\beta} \, \W_{\psi}f   \right\|_{2,\n}, \qquad 0\le \beta< \alpha+1,
 \end{equation}
 where
  \begin{equation}  \label{pittcwtconstant}
 C_{\alpha,\beta}(\psi) =
\sqrt{ c_\psi^{-1} \mathcal{M}\left(|\H(\psi)|^2\right)(-2\beta) }\,2^{-\beta}
 \frac{\Gamma\left(\frac{\alpha-\beta+1}{2}\right)}{\Gamma\left(\frac{\alpha+\beta+1}{2}\right)} .
  \end{equation}
\end{theorem}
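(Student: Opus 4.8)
The plan is to reduce the statement to the scalar Pitt inequality \eqref{cpitt} by a fibrewise argument followed by integration over the scale variable, in the same spirit as the proof of Theorem \ref{thH}. Since $\W_{\psi}f(a,\cdot)\in\ss_e(\R)$ for every $a>0$ by hypothesis, I would first apply \eqref{cpitt} to $g=\W_{\psi}f(a,\cdot)$ and square it, obtaining for each fixed $a>0$
\[
\int_0^\infty \xi^{-2\beta}\,\abs{\H(\W_{\psi}f(a,\cdot))(\xi)}^2\d\m(\xi)\le C_{\alpha,\beta}^2\int_0^\infty x^{2\beta}\,\abs{\W_{\psi}f(a,x)}^2\d\m(x).
\]

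Next I would multiply through by $a^{2\alpha+1}$ and integrate in $a$ over $(0,\infty)$. The right-hand side becomes at once $C_{\alpha,\beta}^2\,\|x^{\beta}\,\W_{\psi}f\|_{2,\n}^2$, which already carries the correct $x$-weighted norm appearing in \eqref{pittcwt}. For the left-hand side I would substitute the frequency identity \eqref{cwti}, which gives $\abs{\H(\W_{\psi}f(a,\cdot))(\xi)}^2=c_{\psi}^{-1}a^{-2\alpha-2}\abs{\H(f)(\xi)}^2\abs{\H(\psi)(\xi/a)}^2$; as every integrand is nonnegative, Tonelli's theorem justifies exchanging the order of integration. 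The powers $a^{2\alpha+1}a^{-2\alpha-2}$ combine into $\frac{\d a}{a}$, isolating the inner scale integral $\int_0^\infty\abs{\H(\psi)(\xi/a)}^2\,\frac{\d a}{a}$, and the substitution $u=\xi/a$ turns this into $\int_0^\infty\abs{\H(\psi)(u)}^2\,\frac{\d u}{u}=c_{\psi}$ by the admissibility condition \eqref{admicondintro}. Hence the factor $c_{\psi}^{-1}$ cancels and the left-hand side collapses to $\|\xi^{-\beta}\,\H(f)\|_{2,\m}^2$, so at this stage I will have shown $\|\xi^{-\beta}\,\H(f)\|_{2,\m}\le C_{\alpha,\beta}\,\|x^{\beta}\,\W_{\psi}f\|_{2,\n}$.

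It then remains to trade the frequency-weighted norm $\|\xi^{-\beta}\,\H(f)\|_{2,\m}$ for the scale-weighted norm $\|a^{-\beta}\,\W_{\psi}f\|_{2,\n}$ occurring in \eqref{pittcwt}. For this I would invoke the Mellin identity \eqref{mellin} with $s=-\beta$, namely
\[
\|\xi^{-\beta}\,\H(f)\|_{2,\m}=\sqrt{\frac{c_{\psi}}{\mathcal{M}(\abs{\H(\psi)}^2)(-2\beta)}}\;\|a^{-\beta}\,\W_{\psi}f\|_{2,\n}.
\]
Substituting this into the inequality just obtained and rearranging multiplies $C_{\alpha,\beta}$ by $\sqrt{c_{\psi}^{-1}\mathcal{M}(\abs{\H(\psi)}^2)(-2\beta)}$, which reproduces precisely the constant $C_{\alpha,\beta}(\psi)$ of \eqref{pittcwtconstant} and completes the argument.

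Because all integrands above are nonnegative, the change of variables and the use of Fubini--Tonelli are harmless; the delicate points are rather those concerning finiteness. The constraint $0\le\beta<\alpha+1$ is exactly the range for which the scalar Pitt inequality \eqref{cpitt} holds with a finite constant, the argument $\frac{\alpha-\beta+1}{2}$ of the numerator Gamma factor $\Gamma\left(\frac{\alpha-\beta+1}{2}\right)$ staying positive requiring $\beta<\alpha+1$. I expect the main point to monitor to be the well-definedness of the constant, that is, the convergence of $\mathcal{M}(\abs{\H(\psi)}^2)(-2\beta)=\int_0^\infty u^{2\beta}\abs{\H(\psi)(u)}^2\,\frac{\d u}{u}$, together with the verification that the fibrewise hypothesis $\W_{\psi}f(a,\cdot)\in\ss_e(\R)$ is indeed what legitimizes applying \eqref{cpitt} inside the scale integral.
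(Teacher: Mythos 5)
Your proposal is correct and follows essentially the same route as the paper's own proof: fibrewise application of the scalar Pitt inequality \eqref{cpitt} to $\W_{\psi}f(a,\cdot)$, substitution of the frequency identity \eqref{cwti}, integration against $a^{2\alpha+1}\d a$ so that the admissibility condition \eqref{admicondintro} collapses the left side to $\left\|\xi^{-\beta}\H(f)\right\|_{2,\m}^2$, and finally the Mellin identity \eqref{mellin} with $s=-\beta$ to convert this into $\left\|a^{-\beta}\,\W_{\psi}f\right\|_{2,\n}$ and produce the constant \eqref{pittcwtconstant}. Your added care about Tonelli and the finiteness of $\mathcal{M}\left(|\H(\psi)|^2\right)(-2\beta)$ is sensible but does not change the structure of the argument.
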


 \begin{proof}
By applying the  Pitt-type inequality \eqref{cpitt} for the function $\W_{\psi}f(a,\cdot)\in \ss_e(\R)$, we obtain
\begin{equation*}
 \int_{0}^\infty \xi^{-2\beta} \abs{\H(\W_{\psi}f(a,\cdot))(\xi)}^2 \d\m(\xi)\le C_{\alpha,\beta}^2 \int_{0}^\infty  x^{2\beta} \abs{\W_{\psi}f(a,x)}^2 \d\m(x).
\end{equation*}
Then, by \eqref{cwti},
\begin{equation*}
 \frac{1}{c_\psi a^{2\alpha+2}} \int_{0}^\infty \xi^{-2\beta}|\H(f)(\xi)|^2  |\H(\psi)(\xi/a) |^2 \d\m(\xi)
 \le C_{\alpha,\beta}^2 \int_{0}^\infty x^{2\beta}\abs{ \W_{\psi}f(a,x)}^2 \d\m(x).
 \end{equation*}
 Integrating  the last inequality with respect to the measure $a^{2\alpha+1} \d a$, we obtain
 \begin{equation*} 
 \frac{1}{c_\psi} \int_{0}^\infty \xi^{-2\beta} |\H(f)(\xi)|^2  \left(\int_0^\infty |\H(\psi)(\xi/a) |^2\frac{\d a}{a}\right)\d\m(\xi)
 \le C_{\alpha,\beta}^2 \int_{\r}x^{2\beta}  \abs{ \W_{\psi}f(a,x)}^2 \d\n(a,x).
 \end{equation*}
 Therefore, since $\psi$ is an admissible wavelet, then by \eqref{admicondintro}, we conclude that
  \begin{equation*} 
  \int_{0}^\infty \xi^{-2\beta} |\H(f)(\xi)|^2   \d\m(\xi)
 \le C_{\alpha,\beta}^2 \int_{\r} x^{2\beta}  \abs{ \W_{\psi}f(a,x)}^2 \d\n(a,x).
 \end{equation*}
 Thus by \eqref{mellin}, we obtain the desired result.
\end{proof}

\begin{remark}\label{rem}
Notice that, if $\beta=0$ then $C_{\alpha,0}=1$  and $\mathcal{M}\left(|\H(\psi)|^2 \right)(0)=c_\psi$, then
  we obtain an equality in Pitt-type inequality \eqref{pittcwt} for any function $f\in\ss_e(\R)$ such that $\W_{\psi}f(a,\cdot)\in \ss_e(\R)$.
\end{remark}

\subsection{Benkner-type uncertainty principle for the HWT}
Now will use  the Pitt-type inequality \eqref{pittcwt} to obtain the following logarithmic uncertainty principle (also known as Beckner-type uncertainty principle) for the HWT.
\begin{theorem}\ \label{thln}
For all function $f\in\ss_e(\R)$ such that $\W_{\psi}f(a,\cdot)\in \ss_e(\R)$, we have
 \begin{equation}  \label{lncwt}
  \int_{\r} \ln(a) \:\abs{\W_{\psi}f(a,x)}^2 \d\n(a,x)+ \int_{\r} \ln(x) \:\abs{\W_{\psi}f(a,x)}^2 \d\n(a,x)  \ge
C_{\alpha}(\psi)  \|f\|^2_{2,\m},
 \end{equation}
 where
\begin{equation}  \label{lncwtconstant}
 C_\alpha(\psi) =\ln(2)+\frac{\Gamma'(\frac{\alpha+1}{2})}{\Gamma(\frac{\alpha+1}{2})} -c_\psi^{-1}\left(\int_0^\infty \frac{\ln(a)}{a}\, \abs{\H(\psi)(a)}^2 \d a\right).
 \end{equation}
\end{theorem}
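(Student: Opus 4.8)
The plan is to obtain \eqref{lncwt} from the Pitt-type inequality \eqref{pittcwt} by differentiating in the exponent $\beta$ at the endpoint $\beta=0$, which is exactly the point where \eqref{pittcwt} becomes an equality. Squaring \eqref{pittcwt} and abbreviating
\[
L(\beta)=\int_\r a^{-2\beta}\abs{\W_{\psi}f(a,x)}^2\d\n(a,x),\qquad R(\beta)=\int_\r x^{2\beta}\abs{\W_{\psi}f(a,x)}^2\d\n(a,x),
\]
the inequality reads $L(\beta)\le C_{\alpha,\beta}(\psi)^2 R(\beta)$ for $0\le\beta<\alpha+1$. I set $\Phi(\beta)=C_{\alpha,\beta}(\psi)^2 R(\beta)-L(\beta)\ge 0$ on $[0,\alpha+1)$. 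By Remark \ref{rem}, $C_{\alpha,0}(\psi)=1$ and $L(0)=R(0)=\|f\|_{2,\m}^2$, so $\Phi(0)=0$. Since $\Phi(\beta)\ge 0=\Phi(0)$ for $\beta$ in a right neighbourhood of $0$, the difference quotient $\Phi(\beta)/\beta$ is nonnegative, and letting $\beta\to 0^+$ gives $\Phi'(0^+)\ge 0$.

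The next step is to evaluate $\Phi'(0)$. Differentiating under the integral sign gives $L'(0)=-2\int_\r\ln(a)\abs{\W_{\psi}f}^2\d\n$ and $R'(0)=2\int_\r\ln(x)\abs{\W_{\psi}f}^2\d\n$. Writing $h(\beta)=C_{\alpha,\beta}(\psi)^2$, so that $h(0)=1$, I get $\Phi'(0)=h'(0)\|f\|_{2,\m}^2+R'(0)-L'(0)$ with $h'(0)=2\,\tfrac{d}{d\beta}\big|_{0}\ln C_{\alpha,\beta}(\psi)$. Taking the logarithm of \eqref{pittcwtconstant},
\[
\ln C_{\alpha,\beta}(\psi)=\tfrac12\ln\!\Big(c_\psi^{-1}\mathcal{M}\big(|\H(\psi)|^2\big)(-2\beta)\Big)-\beta\ln 2+\ln\Gamma\Big(\tfrac{\alpha-\beta+1}{2}\Big)-\ln\Gamma\Big(\tfrac{\alpha+\beta+1}{2}\Big),
\]
and differentiating at $\beta=0$, using $\mathcal{M}(|\H(\psi)|^2)(0)=c_\psi$ and $\tfrac{d}{d\beta}\big|_{0}\mathcal{M}(|\H(\psi)|^2)(-2\beta)=2\int_0^\infty\tfrac{\ln a}{a}|\H(\psi)(a)|^2\d a$, yields
\[
\tfrac{d}{d\beta}\Big|_{0}\ln C_{\alpha,\beta}(\psi)=c_\psi^{-1}\!\int_0^\infty\tfrac{\ln a}{a}\,|\H(\psi)(a)|^2\d a-\ln 2-\frac{\Gamma'(\frac{\alpha+1}{2})}{\Gamma(\frac{\alpha+1}{2})},
\]
the two Gamma terms combining because both digamma contributions carry the factor $-\tfrac12$. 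Hence $h'(0)=-2C_\alpha(\psi)$ with $C_\alpha(\psi)$ exactly as in \eqref{lncwtconstant}.

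Substituting into $\Phi'(0)\ge 0$ and dividing by $2$ gives
\[
\int_\r\ln(a)\abs{\W_{\psi}f}^2\d\n+\int_\r\ln(x)\abs{\W_{\psi}f}^2\d\n\ \ge\ -\tfrac12 h'(0)\,\|f\|_{2,\m}^2=C_\alpha(\psi)\,\|f\|_{2,\m}^2,
\]
which is \eqref{lncwt}. The main obstacle is the analytic justification of the two differentiations in $\beta$: interchanging $\tfrac{d}{d\beta}$ with the integrals defining $L,R$ and $\mathcal{M}(|\H(\psi)|^2)(-2\beta)$, and establishing the right-differentiability of $\Phi$ at $\beta=0$. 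Here the hypotheses $f\in\ss_e(\R)$ and $\W_{\psi}f(a,\cdot)\in\ss_e(\R)$, together with the finiteness of $L(\beta)$ and $R(\beta)$ on $[0,\alpha+1)$ guaranteed by \eqref{pittcwt}, should supply the local domination (via bounds of the type $a^{-2\beta}\le 1+a^{-2\delta}$ and $x^{2\beta}\le 1+x^{2\delta}$ for $0\le\beta\le\delta$, whose logarithmic factors are integrable against the Schwartz-decaying densities) needed to differentiate under the integral sign near $\beta=0$.
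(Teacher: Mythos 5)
Your argument is correct, and in fact it reproduces almost verbatim (up to an overall sign in the definition of $\Phi$) the alternative proof that the paper itself records in the remark ``Another proof of Theorem \ref{thln}'' immediately after the theorem: there too one uses Remark \ref{rem} to get $\Phi(0)=0$, the Pitt-type inequality \eqref{pittcwt} to get a one-sided sign for $\Phi$ on $(0,\alpha+1)$, deduces the sign of $\Phi'(0^+)$, and identifies $\frac{\d}{\d\beta}\big(C_{\alpha,\beta}(\psi)^2\big)\big|_{\beta=0}=-2C_{\alpha}(\psi)$; your computation of the logarithmic derivative of \eqref{pittcwtconstant}, including the Mellin term and the two digamma contributions, matches the paper's. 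However, the paper's \emph{main} proof is different: it applies the logarithmic uncertainty principle \eqref{clupint} for the Hankel transform to each slice $\W_{\psi}f(a,\cdot)\in\ss_e(\R)$, rewrites the frequency term through the intertwining relation \eqref{cwti}, integrates against $a^{2\alpha+1}\d a$ using admissibility \eqref{admicondintro} and Plancherel \eqref{plancherel}, and then evaluates $\int_{\r}\ln(a)\,\abs{\W_{\psi}f}^2\d\n$ in closed form as $\int_0^\infty\ln(\xi)\abs{\H(f)(\xi)}^2\d\m(\xi)-C_\psi\|f\|^2_{2,\m}$ via Fubini and the substitution $a\mapsto\xi/a$, so no differentiation of a parameter-dependent inequality is needed. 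The trade-off is exactly the one you flag at the end: your route (like the paper's remark) makes the theorem a formal corollary of Pitt's inequality by Beckner's endpoint-differentiation mechanism, but it carries the analytic debt of justifying differentiation under the integral sign and the right-differentiability of $\Phi$ at $\beta=0$ (which the paper's remark also leaves implicit), whereas the slice-wise main proof avoids this entirely by relying on identities and an already-established one-dimensional inequality.
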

 \begin{proof}
 Since $\W_{\psi}f(a,\cdot)$ is in the Schwartz class $\ss_e(\R)$, then from \eqref{clupint},
\begin{equation*}
 \int_{0}^\infty \ln(x) \:|\W_{\psi}f(a,x)|^2 \d\m(x)+  \int_{0}^\infty\ln(\xi) \: |\H(\W_{\psi}f(a,\cdot))(\xi)|^2 \d\m(\xi)\ge C_\alpha \|\W_{\psi}f(a,\cdot)\|^2_{2,\m},
\end{equation*}
and by \eqref{cwti}, we have
\begin{equation*}
 \int_{0}^\infty \ln(x) \:|\W_{\psi}f(a,x)|^2 \d\m(x)+  \int_{0}^\infty \ln(\xi) \: |\H(f)(\xi)|^2\frac{|\H(\psi)(\xi/a)|^2}{c_\psi\, a^{2\alpha+2}} \d\m(\xi)\ge C_\alpha \|\W_{\psi}f(a,\cdot)\|^2_{2,\m}.
\end{equation*}
Then by integrating the previous inequality with respect to the measure $a^{2\alpha+1}\d a$, and by \eqref{admicondintro}, \eqref{plancherel}, we obtain 
 \begin{equation}  \label{lncwth0}
  \int_{\r} \ln(x) \:\abs{\W_{\psi}f(a,x)}^2 \d\n(a,x)+  \int_{0}^\infty\ln(\xi) \: |\H(f)(\xi)|^2 \d\m(\xi)\ge 
C_\alpha  \|f\|^2_{2,\m}.
 \end{equation}
 On the other hand, by  \eqref{parseval}, \eqref{admicondintro}, \eqref{plancherel} and \eqref{cwti}
 \begin{eqnarray*}
 \int_{\r} \ln(a) \:\abs{\W_{\psi}f(a,x)}^2 \d\n(a,x)   &=&  \int_0^\infty \ln(a)  \left( \int_0^\infty \abs{\W_{\psi}f(a,x)}^2\d\m(x)\right) a^{2\alpha+1}\d a\\
&=&\int_0^\infty \ln(a)  \left( \int_0^\infty \abs{\H(f)(\xi)}^2\frac{\abs{\H(\psi)(\xi/a)}^2}{c_\psi\, a^{2\alpha+2}} \d\m(\xi)\right) a^{2\alpha+1}\d a\\
&=&c_\psi^{-1}\int_0^\infty \abs{\H(f)(\xi)}^2 \left(\int_0^\infty\ln(a)  \abs{\H(\psi)(\xi/a)}^2 \frac{\d a}{a}\right)\d\m(\xi)\\
&=&c_\psi^{-1}\int_0^\infty \abs{\H(f)(\xi)}^2 \left(\int_0^\infty(\ln(\xi)-\ln(a))  \abs{\H(\psi)(a)}^2 \frac{\d a}{a}\right)\d\m(\xi)\\
&=&\int_0^\infty \ln(\xi)\,\abs{\H(f)(\xi)}^2  \d\m(\xi)- C_\psi \|f\|^2_{2,\m},
 \end{eqnarray*}
 where
 \begin{equation}\label{eqcpsi}
 C_\psi= c_\psi^{-1}\left(\int_0^\infty \frac{\ln(a)}{a}\, \abs{\H(\psi)(a)}^2 \d a\right).
\end{equation}
 This completes the proof.
\end{proof}

 \begin{remark} [Another proof of Theorem \ref{thln}]\
If we define  the function $\Phi$   on $[0,\alpha+1)$ by:
\begin{equation*}
\Phi(\beta)= \int_{\r} a^{-2\beta} \abs{ \W_{\psi}f(a,x)}^2 \d\n(a,x)  - C_{\alpha,\beta}(\psi)^2 \int_{\r} x^{2\beta} \abs{ \W_{\psi}f(a,x)}^2 \d\n(a,x),
\end{equation*}
then
\begin{eqnarray}\label{derphi}
  \Phi'(\beta) &=& -2\int_{\r} a^{-2\beta} \ln(a) \abs{ \W_{\psi}f(a,x)}^2 \d\n(a,x)-2 C_{\alpha,\beta}(\psi)^2 \int_{\r} x^{2\beta}  \ln(x)\: \abs{ \W_{\psi}f(a,x)}^2 \d\n(a,x)  \nonumber\\
    &\qquad& -\frac{\d}{\d \beta} \Big(C_{\alpha,\beta}(\psi)^2\Big)\int_{\r} x^{2\beta} \abs{ \W_{\psi}f(a,x)}^2 \d\n(a,x).
\end{eqnarray}
Moreover,  the Pitt-type inequality  \eqref{pittcwt} implies that $  \Phi(\beta)  \le0$ for every  $\beta\in(0,\alpha+1)$, and by Remark \ref{rem} we have $ \Phi(0)=0$. Therefore $ \Phi'(0^+)  \le0.$  Thus by \eqref{plancherel},
 \begin{equation*}
\int_{\r}  \ln(a)\: \abs{ \W_{\psi}f(a,x)}^2 \d\n(a,x)     +\int_{\r}    \ln(x)\: \abs{ \W_{\psi}f(a,x)}^2 \d\n(a,x)
    \ge -\frac{C}{2}\norm{ f}^2_{2,\m},
\end{equation*}
where
\begin{equation*}
C =\frac{\d}{\d \beta} \Big(C_{\alpha,\beta}(\psi)^2\Big)_{|_{\beta=0}}= -2\left(\ln2+\frac{\Gamma' ( \frac{\alpha+1}{2})}{\Gamma(\frac{\alpha+1}{2} )}-c_\psi^{-1}\left(\int_0^\infty \frac{\ln(a)}{a}\, \abs{\H(\psi)(a)}^2 \d a\right)\right).
\end{equation*}
\end{remark}

\medskip

From the logarithmic uncertainty principle for the HWT \eqref{lncwt}, we can derive a Heisenberg-type uncertainty inequality for functions in $\ss_e(\R)$.
\begin{corollary}
For all $f\in \ss_e(\R)$ such that $\W_{\psi}f(a,\cdot)\in \ss_e(\R)$, we have
\begin{equation}\label{eqHcwt}
 \left\| a  \, \W_{\psi}f \right\|_{2,\n}  \left\| x  \, \W_{\psi}f \right\|_{2,\n} \ge   e^{C_\alpha(\psi)}  \,\|f\|^2_{2,\m}.
\end{equation}
\end{corollary}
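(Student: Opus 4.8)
The plan is to deduce the Heisenberg-type bound \eqref{eqHcwt} from the logarithmic uncertainty principle \eqref{lncwt} by a single application of Jensen's inequality. First I would set $g=\W_{\psi}f$ and, using the Plancherel formula \eqref{plancherel}, observe that $M:=\|g\|_{2,\n}^2=\|f\|_{2,\m}^2$, so that $\d\rho:=M^{-1}|g(a,x)|^2\,\d\n(a,x)$ is a probability measure on $\r$; here $M>0$ because $f$ is nonzero. If either $\|a\,\W_{\psi}f\|_{2,\n}$ or $\|x\,\W_{\psi}f\|_{2,\n}$ is infinite, then \eqref{eqHcwt} holds trivially, so I may assume both second moments are finite, \ie $a^2$ and $x^2$ are $\rho$-integrable.

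Next, exploiting the concavity of the logarithm, I would apply Jensen's inequality to $a^2$ and to $x^2$ separately against $\rho$. This gives $\int_{\r}\ln(a^2)\,\d\rho\le\ln\big(\int_{\r}a^2\,\d\rho\big)$ and $\int_{\r}\ln(x^2)\,\d\rho\le\ln\big(\int_{\r}x^2\,\d\rho\big)$. Rewriting these in terms of the unnormalized integrals and of the weighted norms yields $\tfrac{2}{M}\int_{\r}\ln(a)\,|g|^2\,\d\n\le\ln\big(M^{-1}\|a\,g\|_{2,\n}^2\big)$ and the analogous bound with $x$ in place of $a$. Adding the two inequalities collapses the right-hand side into $\ln\big(M^{-2}\|a\,g\|_{2,\n}^2\,\|x\,g\|_{2,\n}^2\big)$.

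Then I would feed in the logarithmic uncertainty principle of Theorem \ref{thln}. Since \eqref{lncwt} bounds $\int_{\r}\ln(a)\,|g|^2\,\d\n+\int_{\r}\ln(x)\,|g|^2\,\d\n$ from below by $C_{\alpha}(\psi)\,M$, the left-hand side of the summed Jensen estimate is at least $2C_{\alpha}(\psi)$. Hence $2C_{\alpha}(\psi)\le\ln\big(M^{-2}\|a\,g\|_{2,\n}^2\,\|x\,g\|_{2,\n}^2\big)$, and exponentiating and taking a square root gives $\|a\,\W_{\psi}f\|_{2,\n}\,\|x\,\W_{\psi}f\|_{2,\n}\ge e^{C_{\alpha}(\psi)}\,M=e^{C_{\alpha}(\psi)}\,\|f\|_{2,\m}^2$, which is precisely \eqref{eqHcwt}.

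I expect the only delicate point to be the bookkeeping of the normalizing factor $M=\|f\|_{2,\m}^2$: it is exactly the homogeneity of the construction that makes the $\ln M$ terms produced by Jensen combine with the factor $M$ appearing on the right-hand side of \eqref{lncwt}, so that after exponentiation one is left with the clean constant $e^{C_{\alpha}(\psi)}$ and the correct power $\|f\|_{2,\m}^2$. Everything else — the reduction to the case of finite second moments and the legitimacy of Jensen's inequality — is routine once one observes, via Plancherel \eqref{plancherel}, that $|\W_{\psi}f|^2\,\d\n$ becomes a genuine probability measure after dividing by $\|f\|_{2,\m}^2$.
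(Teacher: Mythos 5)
Your proposal is correct and follows essentially the same route as the paper's own proof: both use Plancherel \eqref{plancherel} to turn $\|f\|_{2,\m}^{-2}\,|\W_{\psi}f|^2\,\d\n$ into a probability measure, apply Jensen's inequality (concavity of $\ln$) to the second moments in $a$ and in $x$, combine with the logarithmic uncertainty principle \eqref{lncwt}, and exponentiate. The only differences are cosmetic bookkeeping and your explicit dispatch of the case of infinite moments, which the paper leaves implicit.
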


\begin{proof}
The logarithmic uncertainty principle for the HWT \eqref{lncwt} can be written as follow:
 \begin{equation} 
  \frac{1}{2}\int_{\r} \ln(a^2)\left(\frac{\abs{\W_{\psi}f(a,x)}^2}{\|f\|^2_{2,\m}} \d\n(a,x)\right)+ \frac{1}{2} \int_{\r} \ln(x^2) \left(\frac{ \abs{\W_{\psi}f(a,x)}^2 }{\|f\|^2_{2,\m}}\d\n(a,x)\right)\ge  C_\alpha (\psi) ,
 \end{equation}
 where by 
 \eqref{plancherel},
  $\left(\frac{\abs{\W_{\psi}f(a,x)}^2}{\|f\|^2_{2,\m}} \d\n(a,x)\right)$ 
   is a probability measure on $(0,\infty)^2$.

Now since the logarithm is a concave function, then  by Jensen's inequality,  we have for all nonzero function $f\in \ss_e(\R)$,
 \begin{equation} 
  \ln\left(\int_{\r} a^2\frac{\abs{\W_{\psi}f(a,x)}^2}{\|f\|^2_{2,\m}} \d\n(a,x)\right)^{1/2}+
     \ln\left( \int_{\r} x^2 \frac{\abs{\W_{\psi}f(a,x)}^2}{\|f\|^2_{2,\m}} \d\n(a,x)\right)^{1/2}\ge  C_\alpha (\psi)  .
 \end{equation}
Then
\begin{equation} 
  \ln\left(\frac{\left\| a \, \W_{\psi}f \right\|_{2,\n}  \left\| x \, \W_{\psi}f \right\|_{2,\n}  }{\|f\|^2_{2,\m}} \right) \ge  C_\alpha (\psi)   .
 \end{equation}
 Therefore
 \begin{equation} 
 \|a \W_{\psi}f  \|_{2,\n}    \|x \W_{\psi}f  \|_{2,\n}  \ge  e^{C_\alpha (\psi) } \,\|f\|^2_{2,\m} .
 \end{equation}
Thus the result follows.
\end{proof}

\begin{remark}
By adapting the proof of the last corollary to the logarithmic uncertainty principle \eqref{clupint}, we will derive a new Heisenberg-type uncertainty inequality for the Hankel transform, that is, for all nonzero function $f$ in the Schwartz space $\ss_e(\R)$, we have

\begin{equation}\label{eqHcwthankelnew}
 \left\| x\,f \right\|_{2,\m}  \left\| \xi  \, \H(f) \right\|_{2,\m} \ge  2\exp\left( \frac{\Gamma'(\frac{\alpha+1}{2})}{\Gamma(\frac{\alpha+1}{2})} \right) \,\|f\|^2_{2,\m}.
\end{equation}
Now Since
\begin{equation} \label{approgamma}
  \frac{\Gamma'(z) }{\Gamma(z)} = \ln z-\frac{1}{2z}-2\int_0^\infty \frac{t}{(t^2+z^2)(e^{2\pi t}-1)}\d t,
\end{equation}
then
\begin{equation}
 2\exp\left( \frac{\Gamma'(\frac{\alpha+1}{2})}{\Gamma(\frac{\alpha+1}{2})} \right)  \approx (\alpha+1), \qquad \mathrm{for} \quad \alpha\gg 1,
\end{equation}
which is the optimal constant in Heisenberg's Inequality \eqref{cHpw}.


\end{remark}


\subsection{Hirschman-Beckner entropic uncertainty inequality   for the HWT}
Following a conjecture by   Hirschman \cite{hirchman}, Beckner \cite{beckner} proved  that for all $f\in L^2(\R^d)$, such that $\|f \|_{2}=1$,
\begin{equation}\label{ineqec}
-\int_{\R^d} |f(x)|^2 \,\ln\big(|f(x)|^2\big)\d x-\int_{\R^d} |\widehat{f}(\xi)|^2 \,\ln\big(|\widehat{f}(\xi)|^2\big)\d\xi
\ge d\ln(e/2).
\end{equation}
Now we define the Shannon's differential entropy by (see \cite{shannon}),
 \begin{equation}\label{entropydef}
 \e(\rho)=-\int_\r \rho(a,x) \,\ln\big(\rho(a,x)\big)\d\n(a,x),
\end{equation}
where $\rho$ is probability density function  on $\r$  satisfying $\|\rho\|_{1,\n}=1$.
The aim of this section is to  prove a Hirschman-Beckner entropic uncertainty inequality, and a Heisenberg-type uncertainty relations for the HWT. 
As a first result, we state the following preliminary lemma.
\begin{lemma}\ \label{lemmaentropy}
For every $ x\in[0,1)$ and every $ p\in(2,3]$, we have
 \begin{equation}\label{eqlnsxle2}
0\le \frac{x^2-x^p}{p-2} \le-x^2\ln x .
\end{equation}
\end{lemma}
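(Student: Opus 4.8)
The plan is to prove the double inequality by treating the two parts separately, since the claim is a conjunction of $0 \le \frac{x^2 - x^p}{p-2}$ and $\frac{x^2 - x^p}{p-2} \le -x^2 \ln x$.

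The left inequality is immediate: for $x \in [0,1)$ and $p \in (2,3]$ we have $p > 2$, so the denominator is positive, and since $0 \le x < 1$ with $p > 2$ we get $x^p \le x^2$ (raising a number in $[0,1)$ to a larger exponent decreases it, and $x^0=1$ handled by $x^2 \le 1$). Hence $x^2 - x^p \ge 0$ and the quotient is nonnegative. The case $x=0$ is trivial since all three quantities vanish, so I would assume $x \in (0,1)$ for the rest.

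For the right inequality, the natural approach is to reduce everything to the elementary estimate $1 - t \le -\ln t$ valid for $t > 0$ (equivalently $\ln t \le t - 1$). First I would factor: $\frac{x^2 - x^p}{p-2} = x^2 \cdot \frac{1 - x^{p-2}}{p-2}$. Writing $q = p-2 \in (0,1]$, the desired inequality becomes (after dividing by the positive quantity $x^2$) the claim that $\frac{1 - x^q}{q} \le -\ln x$. Now set $t = x^q \in (0,1)$; then $\ln t = q \ln x$, so $-\ln x = -\frac{1}{q}\ln t = \frac{1}{q}\ln(1/t)$, and the inequality reads $\frac{1-t}{q} \le \frac{1}{q}\ln(1/t)$, i.e. $1 - t \le \ln(1/t) = -\ln t$. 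This is exactly the standard bound $1 - t \le -\ln t$, which holds for all $t > 0$ with equality only at $t=1$.

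I expect the \textbf{only real subtlety} to be bookkeeping with the substitution and the sign of $\ln x$ (which is negative on $(0,1)$), together with handling the boundary case $p = 2$ correctly — but since the hypothesis is $p \in (2,3]$, the denominator $p-2$ is strictly positive and no limiting argument is needed. The restriction $p \le 3$ (i.e. $q \le 1$) is in fact not used in this argument, so the estimate holds for all $p > 2$; the range $(2,3]$ is presumably all that is required in the application. An equivalent and perhaps cleaner packaging would be to fix $x$ and view $g(p) = \frac{x^2 - x^p}{p-2} + x^2 \ln x$ and show $g \le 0$, but the substitution route above avoids any calculus and rests entirely on the single convexity inequality $\ln t \le t - 1$, so that is the route I would present.
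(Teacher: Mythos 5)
Your proof is correct, and it takes a genuinely different route from the paper's. The paper argues by calculus in the variable $p$: for fixed $x\in(0,1)$ it sets $S_x(p)=\frac{x^p-x^2}{p-2}$, differentiates, introduces the numerator $D_x(p)=(p-2)x^p\ln x+x^2-x^p$, checks that $D_x'(p)=(p-2)x^p\ln^2 x\ge 0$ and $D_x(2^+)=0$, concludes that $S_x$ is increasing on $(2,3]$, and reads off the upper bound from $S_x(p)\ge\lim_{p\to 2^+}S_x(p)=x^2\ln x$ (the lower bound and the case $x=0$ being handled as trivially as you handle them). You instead factor out $x^2$, substitute $q=p-2$ and $t=x^q$, and reduce the whole claim to the tangent-line inequality $1-t\le-\ln t$, with no differentiation at all. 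Your argument is more elementary and makes transparent why the bound holds: $-x^2\ln x$ is exactly the $q\to 0^+$ limit of $x^2\,\frac{1-x^q}{q}$, and the substitution shows every value with $q>0$ lies below that limit. It also establishes the inequality for all $p>2$, confirming your remark that $p\le 3$ is superfluous; in fact the paper's derivative computation never uses $p\le 3$ either, the interval $(2,3]$ being merely what the entropy theorem needs when it lets $p\to 2^+$. What the paper's monotonicity formulation buys is that it exhibits $-x^2\ln x$ explicitly as the limiting difference quotient at $p=2$, which mirrors how the lemma is later applied (as a uniform dominating function for $\frac{|W|^2-|W|^p}{p-2}$ before passing to the limit $p\to 2^+$ by dominated convergence), but your inequality supplies exactly the same uniform bound, so nothing is lost.
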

\begin{proof}
For $x\in(0,1)$, we define the function
$$
p\mapsto S_x(p)=\frac{x^p-x^2}{p-2},\qquad p\in(2,3].
$$
Then it derivative satisfies
$$
 S_x'(p)=\frac{(p-2)x^p\ln x+x^2-x^p}{(p-2)^2} ,\qquad p\in(2,3].
$$
For every  $x\in(0,1)$, the function $p\mapsto D_x(p)=(p-2)x^p\ln x+x^2-x^p $ is differentiable on $(2,3]$, and
$$
D_x'(p)= (p-2)x^p\ln^2 x\ge 0.
$$
Then for every  $x\in(0,1)$, the function $D_x$ is increasing on $(2,3]$, and $ D_x(2^+) = 0 $. Thus $S_x$ is also an increasing function on $(2,3]$.  In particular, for every  $x\in(0,1)$,
 \begin{equation}
x^2\ln x= \lim_{p\to 2^+} S_x(p)\le S_x(p),\qquad p\in(2,3].
\end{equation}
The last inequality remains true for $x\to 0^+$, and then
 \begin{equation}
0\le \frac{x^2-x^p}{p-2} \le-x^2\ln x ,\qquad x\in[0,1),\quad p\in(2,3].
\end{equation}
This completes the proof of the lemma.
\end{proof}
Now  we will prove the main result of this section.
\begin{theorem}\
Let $\psi$ an   admissible  wavelet such that $\|\psi \|_{2,\m}^2\le c_\psi$. Then
for all  nonzero  function $f\in \L$,  
\begin{equation} \label{ineqE}
-\int_\r |\W_\psi f(a,x)|^2\ln \left(|\W_\psi f(a,x)|^2\right)\d\n(a,x)
\ge \|f \|_{2,\m}^2   \ln \left(\frac{c_\psi}{\|\psi\|_{2,\m}^2\|f \|_{2,\m}^2}\right).
\end{equation}
\end{theorem}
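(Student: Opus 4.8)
The plan is to obtain the entropic bound from the Lieb-type $L^p$ estimate \eqref{eqpsiphinorm}, using the pointwise comparison of Lemma \ref{lemmaentropy} in place of differentiating $p\mapsto\|\W_\psi f\|_{p,\n}^p$ at $p=2$. First I would reduce to the normalized case $\|f\|_{2,\m}=1$. Indeed, once the inequality is known for unit-norm functions, the general case follows by applying it to $g=f/\|f\|_{2,\m}$: since $\W_\psi g=\W_\psi f/\|f\|_{2,\m}$, writing $M=\|f\|_{2,\m}^2$ and expanding $\ln(|\W_\psi f|^2/M)=\ln(|\W_\psi f|^2)-\ln M$, the term $\ln M\cdot\int_\r|\W_\psi f|^2\d\n$ equals $M\ln M$ by the Plancherel formula \eqref{plancherel}, and rearranging recovers exactly the factor $\|f\|_{2,\m}^2\ln\bigl(c_\psi/(\|\psi\|_{2,\m}^2\|f\|_{2,\m}^2)\bigr)$.

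Under the normalization $\|f\|_{2,\m}=1$, the bound \eqref{norminfty} gives
\[
\|\W_\psi f\|_{\infty}\le c_\psi^{-1/2}\|\psi\|_{2,\m}=\Bigl(\tfrac{\|\psi\|_{2,\m}^2}{c_\psi}\Bigr)^{1/2}\le 1,
\]
where the last inequality is precisely the hypothesis $\|\psi\|_{2,\m}^2\le c_\psi$. Hence $|\W_\psi f(a,x)|\in[0,1)$ for $\n$-almost every $(a,x)\in\r$ (the boundary value contributes nothing), which is exactly the range where Lemma \ref{lemmaentropy} is valid.

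Next, for a fixed $p\in(2,3]$ I would substitute $x=|\W_\psi f(a,x)|$ into \eqref{eqlnsxle2} and integrate against $\d\n$. Since $-y^2\ln y=-\tfrac12 y^2\ln(y^2)$ and $\|\W_\psi f\|_{2,\n}^2=1$ by \eqref{plancherel}, this yields
\[
\frac{2\bigl(1-\|\W_\psi f\|_{p,\n}^{p}\bigr)}{p-2}\ \le\ -\int_\r |\W_\psi f(a,x)|^2\ln\bigl(|\W_\psi f(a,x)|^2\bigr)\,\d\n(a,x).
\]
The Lieb-type inequality \eqref{eqpsiphinorm} gives $\|\W_\psi f\|_{p,\n}^{p}\le K^{\,p/2-1}$ with $K=\|\psi\|_{2,\m}^2/c_\psi\le 1$, so $1-\|\W_\psi f\|_{p,\n}^{p}\ge 1-K^{\,p/2-1}\ge 0$ and the left-hand side is at least $\tfrac{2(1-K^{p/2-1})}{p-2}$. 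As the right-hand side does not depend on $p$, I would pass to the limit $p\to 2^+$: setting $t=p/2-1$ one has $\tfrac{2(1-K^{t})}{2t}=\tfrac{1-K^{t}}{t}\to-\ln K$, whence the entropy is bounded below by $-\ln K=\ln\bigl(c_\psi/\|\psi\|_{2,\m}^2\bigr)$, the normalized statement.

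The only point that genuinely requires the extra hypothesis is the use of $\|\psi\|_{2,\m}^2\le c_\psi$: it is what forces $\|\W_\psi f\|_{\infty}\le 1$, so that Lemma \ref{lemmaentropy} applies pointwise, and simultaneously guarantees $K\le1$ so that $K^{\,p/2-1}\le1$ keeps the lower bound nonnegative throughout. I expect this boundedness-by-one step, rather than the limit computation, to be the crux. Finiteness of the entropy is not an obstacle: because $|\W_\psi f|\le1$, the integrand $-|\W_\psi f|^2\ln(|\W_\psi f|^2)$ is nonnegative, so the entropy is a well-defined element of $[0,+\infty]$ and the inequality is trivial if it is infinite.
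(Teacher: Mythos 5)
Your proof is correct, and it rests on the same two pillars as the paper's own argument: the elementary inequality of Lemma \ref{lemmaentropy} (applicable because the hypothesis $\|\psi\|_{2,\m}^2\le c_\psi$ forces $\|\W_\psi f\|_\infty\le 1$ via \eqref{norminfty}) and the Lieb-type bound \eqref{eqpsiphinorm}, compared in the limit $p\to 2^+$. The difference is in execution, and it is a real one. The paper introduces $\varphi(p)=\|\W_\psi f\|_{p,\n}^p-\left(\|\psi\|_{2,\m}^2/c_\psi\right)^{p/2-1}$, observes that $\varphi\le 0$ on $[2,\infty)$ with $\varphi(2)=0$, hence $\varphi'(2^+)\le 0$, and then evaluates $\frac{\d}{\d p}\norm{\W_\psi f}^p_{p,\n}$ at $p=2^+$ by differentiating under the integral sign; Lemma \ref{lemmaentropy} enters there as the dominating function for the dominated convergence theorem, and this is also why the paper assumes a priori that the entropy $\e\left(|\W_\psi f|^2\right)$ is finite. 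You instead integrate the pointwise inequality of the lemma directly, which replaces the derivative by a difference quotient: $\frac{2\left(1-\|\W_\psi f\|_{p,\n}^p\right)}{p-2}\le \e\left(|\W_\psi f|^2\right)$ for each $p\in(2,3]$, after which the only limit taken is of the explicit scalar $\frac{1-K^t}{t}\to-\ln K$ with $K=\|\psi\|_{2,\m}^2/c_\psi$. This buys you two things: no interchange of limit and integral is needed, and the case of infinite entropy is handled for free (your remark that the integrand is nonnegative makes the inequality trivial there, whereas the paper simply assumes finiteness). One pedantic point: from $\|\W_\psi f\|_\infty\le 1$ you cannot conclude $|\W_\psi f|<1$ almost everywhere, as your parenthetical suggests; but this is harmless, since at points where $|\W_\psi f|=1$ both sides of \eqref{eqlnsxle2} vanish and the inequality you integrate holds there as $0\le 0$. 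Your de-normalization step (expanding $\ln\left(|\W_\psi f|^2/M\right)$ and using Plancherel) is exactly what the paper's final sentence compresses into ``replace $f$ by $f/\|f\|_{2,\m}$,'' and it is carried out correctly.
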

\begin{proof}
Let $f$ be a nonzero function in $\L $,  such that $\|f \|_{2,\m}=1$,
and suppose that $$\e\left( |\W_\psi f|^2 \right)=-\int_\r |\W_\psi f(a,x)|^2\ln \left(|\W_\psi f(a,x)|^2\right)\d\n(a,x)<\infty .$$
Then by   \eqref{norminfty},  we have for all $(a,x)\in\r$, $$ |\W_\psi f(a,x)|  \le   1.$$
 Therefore $$ \e\left(|\W_\psi f|^2 \right)\ge 0, $$ and
from Lemma \ref{lemmaentropy}, we have for every $p\in(2,3]$,
 \begin{equation}\label{eqen}
0\le \frac{ \abs{\W_\psi f(a,x)}^2-   \abs{\W_\psi f(a,x)}^p }{p-2}\le -  \abs{\W_\psi f(a,x)}^2\ln \left( |\W_\psi f(a,x)|\right) .
\end{equation}
Now, from Theorem \ref{procwt}, we know that $\W_\psi f\in L^p_\alpha(\r)$, $p\ge2$,  with
$$
\|\W_\psi f\|^p_{p,\n}\le \left(\frac{\|\psi \|_{2,\m}^2}{c_\psi}  \right)^{p/2-1}.
$$
Then the function $p\mapsto \varphi(p)$ defined on $[2,\infty)$ by
$$
\varphi(p)=\|\W_\psi f\|^p_{p,\n}-\left(\frac{\|\psi \|_{2,\m}^2}{c_\psi}  \right)^{p/2-1}
$$
is negative and by Plancherel formula \eqref{plancherel}, it satisfies $ \varphi(2)=0$.
Thus $ \varphi'(2)\le0$.

On the other hand by \eqref{eqen}, we have for every $p\in(2,3]$, and all $(a,x)\in\r$,
 \begin{eqnarray*}
\int_\r\abs{\frac{ \abs{ \W_\psi f(a,x)}^2-   \abs{ \W_\psi f(a,x)}^p }{p-2}}\d\n(a,x)
&\le&  -  \int_\r\abs{ \W_\psi f(a,x)}^2\ln \left( |\W_\psi f(a,x)|\right)\d\n(a,x), \\
 &=& -\frac{1}{2} \e\left( |\W_\psi f|^2\right)<\infty.
 \end{eqnarray*}
Moreover by Plancherel formula \eqref{plancherel}, we have for every $p>3$, and all $(a,b)\in\r$,
 \begin{eqnarray*}
&&\int_\r\abs{\frac{ \abs{ \W_\psi f(a,x)}^2-   \abs{ \W_\psi f(a,x)}^p }{p-2}}\d\n(a,x)\\
&\le& 2\int_\r\abs{ \W_\psi f(a,x)}^2\d\n(a,x)=2 <\infty.
  \end{eqnarray*}
 Consequently, by the Lebesgue  dominated convergence theorem,
 \begin{eqnarray*}
   \left( \frac{\d}{\d p}  \norm{\W_\psi f}^p_{p,\n}\right)_{|_{p=2^+}}&=&  \int_\r \lim_{p\to 2^+} \frac{\abs{\W_\psi f(a,x)}^p-\abs{ \W_\psi f(a,x)}^2 }{p-2}\d\n(a,x)\\
     &=&  \int_\r   \abs{\W_\psi f(a,x)}^2\ln \left( |\W_\psi f(a,x)|\right)  \d\n(a,x)\\
     &=&  - \frac{1}{2} \e\left( |\W_\psi f|^2\right).
 \end{eqnarray*}
 Therefore
 \begin{eqnarray*}
 \varphi'(2^+)&=& \frac{\d}{\d p} \varphi(p)_{|_{p=2^+}}\\
 &=&- \frac{1}{2} \e\left( |\W_\psi f|^2\right) + \frac{1}{2} \ln\left( \frac{c_\psi}{\|\psi\|_{2,\m}^2}\right)\le0.
 \end{eqnarray*}
Thus
 \begin{equation} \label{eqent0}
  \e\left(  |\W_\psi f|^2\right)\ge    \ln\left( \frac{c_\psi}{\|\psi\|_{2,\m}^2}\right)  .
 \end{equation}
Finally, if $f\in\L$ is any nonzero function, then if we replace $f$ by in  $\frac{f}{\|f \|_{2,\m}}$ in \eqref{eqent0}, we obtain the desired result.
\end{proof}

Consequently we derive the following Heisenberg-type uncertainty inequality for the HWT.
\begin{corollary}\
Let $s,\beta>0$, and let $\psi$   be an admissible  wavelet such that $\|\psi \|_{2,\m}^2\le c_\psi$. Then there exists a positive constant $C_{s,\alpha,\beta}$ such that,
for all nonzero function $f\in \L$,  
\begin{equation} \label{heisum}
\|a^{ s}\, \W_\psi f  \|_{2,\n}^2  + \|x^\beta \,\W_\psi f  \|_{2,\n}^2  \ge C_{s,\alpha,\beta} \, \|f \|_{2,\m}^2,
\end{equation}
where
\begin{equation}\label{heisumconstant}
C_{s,\alpha,\beta}=\frac{(\alpha+1)(s+\beta)}{s\beta}   \exp\left[-1+\frac{ s\beta}{(\alpha+1)(s+\beta)}
\ln \left(\frac{2^{\alpha+2} s\beta\,\Gamma(\alpha+1) }{\Gamma(\frac{\alpha+1}{s} )\Gamma(\frac{\alpha+1}{\beta})}  \frac{c_\psi}{ \|\psi\|_{2,\m}^2}\right)  \right].
\end{equation}

\end{corollary}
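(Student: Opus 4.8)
The plan is to derive the sum inequality \eqref{heisum} from the Hirschman--Beckner entropic bound \eqref{ineqE} by comparing $|\W_\psi f|^2$ with a suitably scaled Gaussian-type density and then optimizing. By homogeneity I would first normalize $\|f\|_{2,\m}=1$, so that the Plancherel formula \eqref{plancherel} makes $\rho=|\W_\psi f|^2$ a probability density on $\r$ and \eqref{ineqE} reads $\e(\rho)\ge\ln\!\left(c_\psi/\|\psi\|_{2,\m}^2\right)$. I may assume both moments $M_1=\|a^s\,\W_\psi f\|_{2,\n}^2$ and $M_2=\|x^\beta\,\W_\psi f\|_{2,\n}^2$ are finite, since otherwise \eqref{heisum} is trivial.

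For each pair $\lambda,\mu>0$ introduce the comparison density $g_{\lambda,\mu}(a,x)=C(\lambda,\mu)\exp(-\lambda a^{2s}-\mu x^{2\beta})$, normalized so that $\|g_{\lambda,\mu}\|_{1,\n}=1$. Since $\ln u\le u-1$, applied to $u=g_{\lambda,\mu}/\rho$, the relative-entropy inequality gives $\e(\rho)=-\int_\r\rho\ln\rho\,\d\n\le-\int_\r\rho\ln g_{\lambda,\mu}\,\d\n=-\ln C(\lambda,\mu)+\lambda M_1+\mu M_2$. The normalization constant is computed from two Gamma integrals: with $\d\n(a,x)=a^{2\alpha+1}\d a\,\d\m(x)$ and the substitution $u=\lambda a^{2s}$ (respectively $u=\mu x^{2\beta}$) one finds $\int_0^\infty e^{-\lambda a^{2s}}a^{2\alpha+1}\d a=\frac{1}{2s}\Gamma\!\left(\frac{\alpha+1}{s}\right)\lambda^{-(\alpha+1)/s}$ together with the analogous formula in $x$, so that $-\ln C(\lambda,\mu)=\ln\frac{\Gamma(\frac{\alpha+1}{s})\Gamma(\frac{\alpha+1}{\beta})}{2^{\alpha+2}s\beta\,\Gamma(\alpha+1)}-\frac{\alpha+1}{s}\ln\lambda-\frac{\alpha+1}{\beta}\ln\mu$. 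Combining with the entropic lower bound yields, for all $\lambda,\mu>0$,
\begin{equation*}
K\le \lambda M_1+\mu M_2-\frac{\alpha+1}{s}\ln\lambda-\frac{\alpha+1}{\beta}\ln\mu,\qquad K:=\ln\!\left(\frac{2^{\alpha+2}s\beta\,\Gamma(\alpha+1)}{\Gamma(\frac{\alpha+1}{s})\Gamma(\frac{\alpha+1}{\beta})}\frac{c_\psi}{\|\psi\|_{2,\m}^2}\right).
\end{equation*}

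Next I would optimize the right-hand side over $\lambda,\mu$: writing $p_1=\frac{\alpha+1}{s}$, $p_2=\frac{\alpha+1}{\beta}$, $P=p_1+p_2=\frac{(\alpha+1)(s+\beta)}{s\beta}$, the minimum is attained at $\lambda^\ast=p_1/M_1$, $\mu^\ast=p_2/M_2$, and substituting gives, after exponentiation, the product bound $M_1^{p_1}M_2^{p_2}\ge e^{K-P}\,p_1^{p_1}p_2^{p_2}$. Finally, applying the weighted arithmetic--geometric mean inequality with weights $p_1/P,\,p_2/P$ to the pair $\frac{P}{p_1}M_1,\,\frac{P}{p_2}M_2$ converts this product bound into $M_1+M_2\ge P\,e^{-1}(e^{K})^{1/P}$, which is exactly the constant $C_{s,\alpha,\beta}$ of \eqref{heisumconstant}; undoing the normalization (replacing $f$ by $f/\|f\|_{2,\m}$) then produces the factor $\|f\|_{2,\m}^2$. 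The one point demanding care is the justification of the relative-entropy inequality: one must check that $\rho\ln\rho$ and $\rho\ln g_{\lambda,\mu}$ are integrable, which is where the boundedness $|\W_\psi f|\le1$ from \eqref{norminfty} (valid because $\|\psi\|_{2,\m}^2\le c_\psi$) and the assumed finiteness of $M_1,M_2$ enter; granting this, every remaining step is an explicit computation.
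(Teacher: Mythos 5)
Your proof is correct and it reaches exactly the constant \eqref{heisumconstant}; moreover it rests on the same pillars as the paper's own argument: normalize $\|f\|_{2,\m}=1$ so that $|\W_\psi f|^2$ is a probability density by \eqref{plancherel}, invoke the entropic bound \eqref{ineqE}, compare against a normalized density of the form $e^{-\lambda a^{2s}-\mu x^{2\beta}}$ whose normalization is computed by the same Gamma integrals, and optimize. The one genuine difference is in the optimization: the paper takes the one-parameter family $Q_t\propto e^{-(a^{2s}+x^{2\beta})/t}$, applies Jensen's inequality (equivalently your Gibbs inequality $\ln u\le u-1$), and maximizes over $t$ to land directly on the sum bound \eqref{heisum}; you instead keep two independent parameters $\lambda,\mu$, minimize in each separately to get the intermediate product bound $M_1^{p_1}M_2^{p_2}\ge e^{K-P}p_1^{p_1}p_2^{p_2}$, and then pass to the sum by weighted AM--GM. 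Both routes give the identical constant, but your detour buys something: the intermediate product bound is, up to renaming exponents, a Heisenberg-type product inequality of the shape of \eqref{heipro}, which the paper only obtains afterwards by applying the dilation covariance \eqref{dtt} to \eqref{heisum}; so your argument delivers the sum and product versions simultaneously, without the dilation step. Your attention to the integrability of $\rho\ln\rho$ and $\rho\ln g_{\lambda,\mu}$ (using $|\W_\psi f|\le 1$ from \eqref{norminfty} under the hypothesis $\|\psi\|_{2,\m}^2\le c_\psi$, plus finiteness of the two moments) is also slightly more careful than the paper, which applies Jensen's inequality without comment.
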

\begin{proof}
Let  $f\in \L $,   such that $\|f \|_{2,\m}=1$, and let $t,s,\beta>0$. Then a straightforward computation gives,
$$
\int_0^\infty\int_0^\infty e^{-\frac{a^{2s}+x^{2\beta}}{t}}\d\n(a,x)= c(s,\alpha,\beta)\;t^{\frac{\alpha+1}{\beta}+\frac{\alpha+1}{s}},
$$
where
$$
c(s,\alpha,\beta)= \frac{\Gamma(\frac{\alpha+1}{s})\Gamma( \frac{\alpha+1}{\beta})}{  2^{\alpha+2}  s\beta\,\Gamma(\alpha+1)  }.
$$
Now let $Q_{t,s,\alpha,\beta}$ the function defined on $\r$  by
$$
Q_{t,s,\alpha,\beta}(a,x)= \frac{e^{-\frac{a^{2s}+x^{2\beta}}{t}}}{c(s,\alpha,\beta)\;t^{\frac{\alpha+1}{\beta}+\frac{\alpha+1}{s}}}.
$$
Then $Q_{t,s,\alpha,\beta}(a,x)\d\n(a,x)$   is a probability measure on $\r$, and since the function $t\mapsto t\ln t$ is convex, we obtain by  Jensen's inequality,
$$
\int_\r |\W_\psi f(a,x)|^2 \ln \left(\frac{|\W_\psi f(a,x)|^2}{Q_{t,s,\alpha,\beta}(a,x)} \right)\d\n(a,x)\ge 0.
$$
Therefore
$$
\frac{1}{t}\left( \|a^{ s} \,\W_\psi f  \|_{2,\n}^2  + \|x^\beta\, \W_\psi f  \|_{2,\n}^2\right) \ge \e\left(|\W_\psi f |^2\right)-\ln (t^{\frac{\alpha+1}{\beta}+\frac{\alpha+1}{s}})- \ln c(s,\alpha,\beta).
$$
It follows that by \eqref{eqent0},
$$
 \|a^{ s}\, \W_\psi f  \|_{2,\n}^2  + \|x^\beta\, \W_\psi f  \|_{2,\n}^2  \ge t\left[\ln \left(\frac{c_\psi}{\|\psi\|_{2,\m}^2}\right) -\ln (t^{\frac{\alpha+1}{\beta}+\frac{\alpha+1}{s}})- \ln c(s,\alpha,\beta)\right].
$$
  Minimizing the right hand side with $t_0=e^{\frac{ s\beta c}{(\alpha+1)(s+\beta)}-1} $, we obtain
  $$
 \|a^{s}\, \W_\psi f  \|_{2,\n}^2  + \|x^\beta\, \W_\psi f  \|_{2,\n}^2  \ge C_{s,\alpha,\beta}
$$
 where $C_{s,\alpha,\beta}=\frac{(\alpha+1)(s+\beta)}{ s\beta}t_0$, and $c=\ln \left(\frac{c_\psi}{ c(s,\alpha,\beta)\,\|\psi\|_{2,\m}^2}\right) $.
 Finally replacing $f$ by $\frac{f}{\|f\|_{2,\m}}$, we conclude the desired result,
  \end{proof}
 Moreover we have the following Heisenberg-type uncertainty like \eqref{heimillen}, without involving the Mellin transform.
 \begin{corollary}\
Let $s,\beta>0$, and let $\psi$ an be an admissible  wavelet such that $\|\psi \|_{2,\m}^2\le c_\psi$. Then there exists a positive constant $C(s,\alpha,\beta)$ such that, for all nonzero function $f\in \L$,  
\begin{equation} \label{heipro}
\|a^{ s} \, \W_\psi f  \|_{2,\n}^\beta  \|x^\beta \,\W_\psi f  \|_{2,\n}^s  \ge C(s,\alpha,\beta) \, \|f \|_{2,\m}^{s+\beta},
\end{equation}
where
\begin{equation} \label{heiproconstant}
C(s,\alpha,\beta) =\left(\frac{s}{\beta} \right)^{\frac{s}{2}} \left(\frac{\beta}{s+\beta}\,C_{s,\alpha,\beta}\right)^{\frac{s+\beta}{2}}.
\end{equation}
\end{corollary}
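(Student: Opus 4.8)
The plan is to obtain the product-form inequality \eqref{heipro} from the sum-form inequality \eqref{heisum} of the preceding corollary by the same dilation argument that turns \eqref{cHpw3} into \eqref{cHpw4} in Theorem \ref{thH}. Since \eqref{heisum} holds for every nonzero $f\in\L$ and $\D_\lambda$ maps $\L$ into itself, I may first normalize $\|f\|_{2,\m}=1$ and then apply \eqref{heisum} to the dilated function $\D_\lambda f$, for arbitrary $\lambda>0$. By the isometry property \eqref{dt} the right-hand side is unchanged, while the dilation identity \eqref{dtt} gives $\W_\psi(\D_\lambda f)(a,x)=\W_\psi f(a/\lambda,\lambda x)$. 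Performing the substitution $a=\lambda u,\ x=v/\lambda$ in $\d\n(a,x)=a^{2\alpha+1}\d a\,\d\m(x)$, the factor $a^{2\alpha+1}\d a$ scales by $\lambda^{2\alpha+2}$ and $\d\m(x)$ scales by $\lambda^{-(2\alpha+2)}$, so the two cancel and only the weights $a^{2s}=\lambda^{2s}u^{2s}$ and $x^{2\beta}=\lambda^{-2\beta}v^{2\beta}$ survive. This yields
\begin{equation*}
\left\|a^{s}\,\W_\psi(\D_\lambda f)\right\|_{2,\n}^2=\lambda^{2s}A,\qquad
\left\|x^{\beta}\,\W_\psi(\D_\lambda f)\right\|_{2,\n}^2=\lambda^{-2\beta}B,
\end{equation*}
where I abbreviate $A=\left\|a^{s}\,\W_\psi f\right\|_{2,\n}^2$ and $B=\left\|x^{\beta}\,\W_\psi f\right\|_{2,\n}^2$. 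Substituting into \eqref{heisum} gives, for all $\lambda>0$, the scaled inequality $\lambda^{2s}A+\lambda^{-2\beta}B\ge C_{s,\alpha,\beta}$.

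Next I would minimize the left-hand side over $\lambda>0$. Elementary calculus (setting the derivative to zero) gives the minimizer $\lambda_0=\left(\beta B/(sA)\right)^{1/(2(s+\beta))}$, at which the two terms become proportional and combine to
\begin{equation*}
\min_{\lambda>0}\left(\lambda^{2s}A+\lambda^{-2\beta}B\right)
=\frac{s+\beta}{s}\left(\frac{s}{\beta}\right)^{\frac{\beta}{s+\beta}}A^{\frac{\beta}{s+\beta}}B^{\frac{s}{s+\beta}}.
\end{equation*}
Combining this with the scaled inequality produces
\begin{equation*}
A^{\frac{\beta}{s+\beta}}B^{\frac{s}{s+\beta}}\ge \frac{s}{s+\beta}\left(\frac{\beta}{s}\right)^{\frac{\beta}{s+\beta}}C_{s,\alpha,\beta}.
\end{equation*}

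Finally, raising both sides to the power $(s+\beta)/2$ turns the left-hand side into $A^{\beta/2}B^{s/2}=\left\|a^{s}\,\W_\psi f\right\|_{2,\n}^{\beta}\left\|x^{\beta}\,\W_\psi f\right\|_{2,\n}^{s}$, and a short regrouping of exponents shows the resulting constant equals exactly $C(s,\alpha,\beta)$ in \eqref{heiproconstant}; undoing the normalization by replacing $f$ with $f/\|f\|_{2,\m}$ restores the factor $\|f\|_{2,\m}^{s+\beta}$ and yields \eqref{heipro}. The argument is essentially mechanical once the scaling relations are in place; the only points requiring care are tracking the four weight/Jacobian powers in the two-variable measure $\n$ so that the $\lambda^{2s}$ and $\lambda^{-2\beta}$ factors come out correctly, and checking that the optimized constant $\frac{s}{s+\beta}\left(\frac{\beta}{s}\right)^{\beta/(s+\beta)}C_{s,\alpha,\beta}$, after exponentiation, reorganizes into the displayed form $\left(s/\beta\right)^{s/2}\left(\tfrac{\beta}{s+\beta}C_{s,\alpha,\beta}\right)^{(s+\beta)/2}$. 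Both are routine verifications rather than genuine obstacles.
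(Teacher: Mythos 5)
Your proposal is correct and follows essentially the same route as the paper: replace $f$ by $\D_\lambda f$ in \eqref{heisum}, use \eqref{dtt} and \eqref{dt} together with the $\n$-invariant change of variables to get $\lambda^{2s}\left\|a^{s}\,\W_\psi f\right\|_{2,\n}^{2}+\lambda^{-2\beta}\left\|x^{\beta}\,\W_\psi f\right\|_{2,\n}^{2}\ge C_{s,\alpha,\beta}\,\|f\|_{2,\m}^{2}$, then minimize over $\lambda>0$ and regroup exponents. Your optimized constant $\tfrac{s+\beta}{s}\left(\tfrac{s}{\beta}\right)^{\beta/(s+\beta)}$ agrees with the paper's equivalent form $\tfrac{s+\beta}{\beta}\left(\tfrac{s}{\beta}\right)^{-s/(s+\beta)}$, and your final regrouping does reproduce \eqref{heiproconstant} exactly.
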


\begin{proof}
Replacing $f$ by $\D_\lambda f$ in Inequality \eqref{heisum}, we obtain by \eqref{dtt},
$$
\int_\r a^{2s}  \left|\W_\psi f\left(\frac{a}{\lambda},\lambda x\right) \right|^2\d\n(a,x)
+ \int_\r x^{2\beta}  \left|\W_\psi f\left(\frac{a}{\lambda},\lambda x\right) \right|^2\d\n(a,x)\ge C_{s,\alpha,\beta}\|f\|_{2,\m}^2.
$$
Then a suitable change of variables gives
$$
\lambda^{ 2s}\|a^{ s} \, \W_\psi f  \|_{2,\n}^2
+\lambda^{-2\beta} \|x^\beta \,\W_\psi f  \|_{2,\n}^2\ge C_{s,\alpha,\beta}\|f\|_{2,\m}^2.
$$
Minimizing the left hand side with $\lambda=\left(\frac{\beta\|x^\beta \,\W_\psi f  \|_{2,\n}^2}{s\|a^{s} \, \W_\psi f  \|_{2,\n}^2 }\right)^{\frac{1}{2(s+\beta)}}$,
we obtain
$$
\frac{s+\beta}{\beta}\left(\frac{s}{\beta}\right)^{\frac{-s}{s+\beta}}
\|a^{-s} \, \W_\psi f  \|_{2,\n}^{\frac{2\beta}{s+\beta}}
\|x^\beta \,\W_\psi f  \|_{2,\n}^{\frac{2s}{s+\beta}}\ge C_{s,\alpha,\beta}\|f\|_{2,\m}^{2},
$$
which allows to conclude.
\end{proof}

\end{document}